\author[D.M. Ambrose]{David M. Ambrose}
\address{Drexel University, Department of Mathematics, Philadelphia, PA 19104, USA}
\email{dma68@drexel.edu}
\author[M.C. Lopes Filho]{Milton C. Lopes Filho}
\address{Instituto de Matematica, Universidade Federal do Rio de Janeiro, Caixa Postal 68530,
Rio de Janeiro, RJ, 21941-909 Brazil}
\email{mlopes@im.ufrj.br}
\author[H.J. Nussenzveig Lopes]{Helena J. Nussenzveig Lopes}
\address{Instituto de Matematica, Universidade Federal do Rio de Janeiro, Caixa Postal 68530,
Rio de Janeiro, RJ, 21941-909 Brazil}
\email{hlopes@im.ufrj.br}
\title[Kuramoto-Sivashinsky with singular data]
{Existence and analyticity of solutions of the Kuramoto-Sivashinsky equation with singular data}
\newtheorem{theorem}{Theorem}
\newtheorem{lemma}[theorem]{Lemma}
\newtheorem{remark}[theorem]{Remark}
\newcommand{\vertiii}[1]{{\left\vert\kern-0.25ex\left\vert\kern-0.25ex\left\vert #1
    \right\vert\kern-0.25ex\right\vert\kern-0.25ex\right\vert}}
\newcommand{\calX}{\mathcal{X}}
\newcommand{\calY}{\mathcal{Y}}
\newcommand{\calPM}{\mathcal{PM}}
\newcommand{\calL}{\mathcal{L}}
\newcommand{\calB}{\mathcal{B}}
\newcommand{\real}{\mathbb{R}}
\begin{document}

\begin{abstract} We prove existence of solutions to the Kuramoto-Sivashinsky equation with low-regularity data,
in function spaces based on the Wiener algebra and in pseudomeasure spaces.  In any spatial dimension,
we allow the data to have its antiderivative in the Wiener algebra.  In one spatial dimension, we also allow
data which is in a pseudomeasure space of negative order.  In two spatial dimensions, we also allow data which
is in a pseudomeasure space one derivative more regular than in the one-dimensional case.  In the course of
carrying out the existence arguments, we show a parabolic gain of regularity of the solutions as compared to the data.
Subsequently, we show that the solutions are in fact analytic at any positive time in the interval of existence.
\end{abstract}

\maketitle

\section{Introduction}

The scalar form of the Kuramoto-Sivashinsky equation is
\begin{equation}\label{KSEquationOriginal}
\phi_{t}+\frac{1}{2}|\nabla\phi|^{2}+\Delta^{2}\phi+\Delta\phi=0.
\end{equation}
This is taken with initial data
\begin{equation}\label{data}
\phi(\cdot,0)=\phi_{0}.
\end{equation}
The spatial domain we consider is the $n$-dimensional torus $\mathbb{T}^{n},$ which is given by
\begin{equation}\nonumber
\mathbb{T}^{n}=\prod_{i=1}^{n}\left[0,L_{i}\right],
\end{equation}
for some given lengths $L_{i}>0,$ $i\in\{1,\ldots,n\},$ and with periodic boundary conditions.
Equation \eqref{KSEquationOriginal} was introduced separately by Kuramoto and Tsuzuki when studying
pattern formation in reaction-diffusion equations \cite{kuramoto} and by Sivashinsky in modeling the
evolution of flame fronts \cite{sivashinsky}.  As a model of flame fronts, the surface $(\vec{x},\phi(\vec{x},t))$
gives the location of the interface between the burnt and unburnt phases of a gas undergoing combustion.
As such, the physical cases are $n=1$ (representing the interface between two two-dimensional gases) and
$n=2$ (representing the interface between two three-dimensional gases).

Demonstrating local well-posedness of the initial value problem \eqref{KSEquationOriginal}, \eqref{data} is
straightforward for relatively smooth data.  For global well-posedness, the situation is only clear in dimension
$n=1.$  In this case the nonlinearity has a simpler structure, and there are many results, especially that of Tadmor
\cite{tadmor}, but also those of Bronski and Gambill \cite{bronskiGambill}, Goodman \cite{goodman}, and
Nicolaenko, Scheurer, and Temam \cite{NST}.  These papers all assume the same regularity on the initial data, which
is that $\phi_{0}\in H^{1}.$

In two space dimensions, there are two types of global existence results, both of which have limitations.  The
earliest global existence result in two dimensions was the thin domain result of Sell and Taboado \cite{sellTaboada};
this was then followed by the other thin-domain results \cite{benachourEtAl},   \cite{kukavicaMassatt}, \cite{molinetThin}.
Other than these, the first author and Mazzucato have demonstrated global existence of small solutions for the
two-dimensional Kuramoto-Sivashinsky equation for certain domain sizes (i.e. placing certain conditions on $L_{1}$ and
$L_{2}$), but without the anisotropy inherent in the thin-domain results \cite{ambroseMazzucato1},
\cite{ambroseMazzucato2}.

Other global results rely upon modifying either the linear or nonlinear parts of \eqref{KSEquationOriginal}.
For instance, by no longer considering fourth-order
linear terms a maximum principle may be introduced, leading to global existence
of solutions \cite{lariosYamazaki}, \cite{molinetBS}.   Changing the power in the nonlinear term leads to global existence
or singularity formation, depending on the power, as demonstrated in \cite{belloutEtAl}.  Global existence also follows
from the introduction of appropriate transport terms, as shown in \cite{cotiZelatiEtAl}, \cite{fengMazzucato}.

Gruji\'{c} and Kukavica demonstrated existence of solutions for the Kuramoto-Sivashinsky equation in one dimension,
with $\partial_x \phi_0 \in L^{\infty},$ and also demonstrated analyticity of the solutions at positive times \cite{grujicKukavica}.
Biswas and Swanson considered the Kuramoto-Sivashinsky equation in general dimension. Their results include
improving the assumption made by Gruji\'{c} and Kukavica, in dimension one,  on the regularity of the data (by one);  Biswas and Swanson also study higher regularity through estimates of Gevrey norms \cite{biswasSwanson}.

This research is naturally related to work on the Navier-Stokes equations, for which there have been many studies of existence
of solutions starting from low-regularity data. The optimal result in critical spaces is due to Koch and Tataru for
data in $BMO^{-1}$ \cite{kochTataru}.  The
present work draws more from other studies, such as by Cannone and Karch for data in $PM^{2},$ and by Lei
and Lin for data in $X^{-1}$ \cite{cannoneKarch,leiLin} (see Sections \ref{preliminarySection} and \ref{pseudomeasureSection} in the present work for the definition of these spaces).  In \cite{baePAMS}, Bae proved a version of the Lei-Lin result using a
two-norm approach, which also gives analyticity of the solution at positive times, and which drew
upon the earlier work \cite{baeBiswasTadmor}. The authors of the present paper adapted the work of \cite{baePAMS} to
the spatially periodic case, finding an improved estimate for the radius of analyticity \cite{ALN4}.

The primary contribution of the present work is to weaken the assumed regularity of the initial data
as compared to prior works on existence of solutions for the Kuramoto-Sivashinsky equation
\eqref{KSEquationOriginal}. Our primary motivation is to examine how the two-norm approach may be used to improve
regularity requirements and analyticity estimates beyond the Navier-Stokes system.

The first author and
Mazzucato proved existence of solutions for the two-dimensional Kuramoto-Sivashinsky equations in the case
of small domain sizes with data which has one derivative in the Wiener algebra
or one derivative in $L^{2}$ \cite{ambroseMazzucato1},
\cite{ambroseMazzucato2}.  Subsequently, Coti Zelati, Dolce, Feng, and Mazzucato treated situations (for an
equation with added advection) with data in $L^{2}$ \cite{cotiZelatiEtAl}; Feng and Mazzucato also treated
a different class of advective equations, again with $L^{2}$ data, in \cite{fengMazzucato}.
Biswas and Swanson treat the whole-space case rather than the spatially periodic case, and take data such that the
Fourier transform is in an $L^{p}$ space \cite{biswasSwanson}, with $p\neq 1$ and $p\neq\infty$ (we treat the
complementary cases of periodic data with Fourier coefficients in $\ell^{1}$ or $\ell^{\infty}$).
Existence of solutions for the Kuramoto-Sivashinsky equation with pseudomeasure data was treated by Miao and
Yuan, but only in non-physical spatial dimensions, specificially $n=4,$ $n=5,$ and $n=6$ \cite{miaoYuan}.
In the present work, we deal with the physically relevant spatial dimensions $n=1$ and $n=2.$
It is notable that our one-dimensional
existence theorem allows initial data with Fourier coefficients which grow as the Fourier variable, $k,$ goes to infinity,

We prove that our solutions are global in time in the case that the linearized problem has no growing Fourier modes.
This amounts to an assumption of smallness of the periodic cell that comprises the spatial domain.  In the general case
of larger period cells, our results are valid up until a finite time.  This is consistent with the lack of general global existence
theory for the Kuramoto-Sivashinsky equation in dimension two and higher.
In addition to proving that solutions exist, we also prove that they are analytic at positive times, following the approach of
Bae \cite{baePAMS}, which the authors also used previously for the Navier-Stokes equations \cite{ALN4}.

The plan of the paper is as follows.  We establish some preliminaries in Section \ref{preliminarySection}.  This includes
introducing a number of function spaces, and giving an abstract fixed point result.  In Section
\ref{wienerDataBilinearSection} we establish existence of solutions with data in a space related to the Wiener algebra.
In Section \ref{pseudomeasureSection},
we treat existence of solutions with data in pseudomeasure spaces.  We establish the associated
linear estimates in Section \ref{pseudomeasureLinear},
the nonlinear estimates in one spatial dimension in Section \ref{1DSection},
and the nonlinear estimates in two spatial dimensions in Section \ref{2DSection}.
Analyticity of all of these solutions at positive times is demonstrated in Section \ref{analyticitySection}.
The main theorems are the existence theorems Theorem \ref{existY-1} at the beginning of Section
\ref{wienerDataBilinearSection}, Theorem \ref{n=1PseudomeasureTheorem} at the beginning of Section
\ref{1DSection}, and Theorem \ref{n=2PseudomeasureTheorem} at the beginning of Section \ref{2DSection},
and the analyticity theorems Theorem \ref{firstAnalyticityTheorem} and
Theorem \ref{secondAnalyticityTheorem} at the end of Section \ref{analyticitySection}.
We close with some concluding remarks in Section \ref{conclusionSection}.

\section{Preliminaries}\label{preliminarySection}

We observe that the mean of $\phi$ does not influence the evolution of $\phi$. We thus introduce the projection $\mathbb{P}$ which
removes the mean of a periodic function, as follows:
\begin{equation}\nonumber
\mathbb{P}f=f-\frac{1}{L_{1}\cdots L_{n}}\int_{\mathbb{T}^{n}}f(x)\ dx.
\end{equation}
We let $\psi=\mathbb{P}\phi,$ and we note that $\nabla\psi=\nabla\phi;$ we then see that $\psi$ satisfies the equation
\begin{equation}\label{KSEquationPsi}
\psi_{t}+\frac{1}{2}\mathbb{P}|\nabla\psi|^{2}+\Delta^{2}\psi+\Delta\psi=0.
\end{equation}

Recall the Fourier series of a periodic function, given in terms of its Fourier coefficients:
\begin{equation}\nonumber
f(x) \sim \sum_{k\in\mathbb{Z}^{n}} \hat{f}(k)e^{2\pi i k_{1}x_{1}/L_{1}}\cdots e^{2\pi i k_{n}x_{n}/L_{n}}.
\end{equation}
From this we see directly that the symbol of the partial differential operator $\partial_{x_{i}}$ is
\begin{equation}\nonumber
\sigma(\partial_{x_{j}})=\frac{2\pi i}{L_{j}}k_{j}.
\end{equation}
Therefore the symbol of the Laplacian and bi-Laplacian are
\begin{equation}\nonumber
\sigma(\Delta)=-\sum_{j=1}^{n}\left(\frac{2\pi}{L_{j}}\right)^{2}k_{j}^{2},
\end{equation}
\begin{equation}\nonumber
\sigma(\Delta^{2})=\left(\sum_{j=1}^{n}\left(\frac{2\pi}{L_{j}}\right)^{2}k_{j}^{2}\right)^{2}.
\end{equation}

We next introduce some spaces based on the Wiener algebra, which we denote as $Y^{m}$ for $m\in\mathbb{R}.$
A periodic function, $f,$ is in $Y^{m}$ if the norm given by
\begin{equation}\nonumber
\|f\|_{Y^{m}}=|\hat{f}(0)|+\sum_{k\in\mathbb{Z}^{n}_{*}}|k|^{m}|\hat{f}(k)|
\end{equation}
is finite.  If $m=0,$ then this space is exactly the Wiener algebra.
We let $T>0$ be given, with $T$ possibly being infinite.  On the space-time domain $[0,T]\times\mathbb{T}^{n},$
we also have a related function space, $\mathcal{Y}^{m}.$  The norm for this space is
\begin{equation}\nonumber
\|f\|_{\mathcal{Y}^{m}}=\sup_{t\in[0,T]}|\hat{f}(t,0)|+\sum_{k\in\mathbb{Z}^{n}}\sup_{t\in[0,T]}|k|^{m}|\hat{f}(t,k)|.
\end{equation}
In practice we will be dealing with functions with zero mean,
so it will be equivalent for us to treat the norms as
\begin{equation}\nonumber
\|f\|_{Y^{m}}=\sum_{k\in\mathbb{Z}^{n}\setminus\{0\}}|k|^{m}|\hat{f}(k)|,
\end{equation}
\begin{equation}\nonumber
\|f\|_{\mathcal{Y}^{m}}=\sum_{k\in\mathbb{Z}^{n}\setminus\{0\}}\sup_{t\in[0,T]}|k|^{m}|\hat{f}(t,k)|.
\end{equation}
We note that the space $X^{-1}$ as used in  \cite{ALN4}, \cite{baePAMS}, \cite{leiLin} is equal to our space $Y^{-1}.$

Given $m\in\mathbb{R},$ we also have a related function space on space-time, $\mathcal{X}^{m}.$
We define the space $\mathcal{X}^{m}$ according to the norm
\begin{equation}\label{x2Definition}
\|f\|_{\mathcal{X}^{m}}=\int_{0}^{T}|\hat{f}(t,0)|\ dt + \sum_{k\in\mathbb{Z}^{n}_{*}}\int_{0}^{T}|k|^{m}|\hat{f}(t,k)| \ dt.
\end{equation}
If $f$ has zero mean for all times, then this becomes simply
\begin{equation}\label{x2DefinitionZeroMean}
\|f\|_{\mathcal{X}^{m}}= \sum_{k\in\mathbb{Z}^{n}_{*}}\int_{0}^{T}|k|^{m}|\hat{f}(t,k)| \ dt.
\end{equation}
In the results to follow, we will typically take $m=2$ or $m=4.$

We will consider two cases in what follows.  We first describe Case A.  In Case A, we assume that all $L_{i}<2\pi,$ and we take
$T=\infty.$  Because of the size of the $L_{i},$ we have $\sigma(\Delta^{2}+\Delta)(k)>0$ for all $k\in\mathbb{Z}^{n}_{*}.$
Then we have
\begin{equation}\nonumber
\sup_{t\in[0,T]}\sup_{k\in\mathbb{Z}^{n}_{*}}e^{-t\sigma(\Delta^{2}+\Delta)(k)}
=\sup_{t\in[0,\infty)}\sup_{k\in\mathbb{Z}^{n}_{*}}e^{-t\sigma(\Delta^{2}+\Delta)(k)}=1.
\end{equation}
In Case B, we let $T\in(0,\infty)$ be given, and we assume there exists at least one $i\in\{1,\ldots,n\}$ such that $L_{1}\geq2\pi.$
Then there exists $M_{1}>0$ such that
\begin{equation}\label{M1}
\sup_{t\in[0,T]}\sup_{k\in\mathbb{Z}^{n}_{*}}e^{-t\sigma(\Delta^{2}+\Delta)(k)}\leq M_{1}.
\end{equation}
In Case B, we make the decomposition $\mathbb{Z}^{n}_{*}=\Omega_{F}\cup\Omega_{I},$
where for all $k\in\Omega_{F},$ the symbol is non-positive, i.e. $\sigma(\Delta^{2}+\Delta)(k)\leq0.$
Then on the complement, of course we have for all $k\in\Omega_{I},$ $\sigma(\Delta^{2}+\Delta)(k)>0.$
Of course the set $\Omega_{F}$ is finite and $\Omega_{I}$ is infinite.

We may of course also consider the decomposition $\mathbb{Z}^{n}_{*}=\Omega_{F}\cup\Omega_{I}$ in Case A as well,
and then we simply have $\Omega_{F}=\emptyset.$
In either case, we have there exists $M_{2}>0$ such that
\begin{equation}\label{M2}
\sigma(\Delta^{2}+\Delta)(k)>M_{2}|k|^{4},\qquad \forall k\in\Omega_{I}.
\end{equation}
We also introduce $M_{3}$ to be the maximum value of $|k|$ for $k\in\Omega_{F},$
\begin{equation}\label{M3}
|k|\leq M_{3},\qquad \forall k\in\Omega_{F}.
\end{equation}

We will rely on the following classical abstract result:
\begin{lemma} \label{fixedpoint}
Let ($X,$ $\vertiii{\cdot}_{X}$) be a Banach space. Assume that $\mathcal{B}:X \times X \to X$ is a continuous bilinear operator and let $\eta>0$ satisfy $\eta\geq \|\mathcal{B}\|_{X\times X\rightarrow X}$. Then, for any $x_0 \in X$ such that
\[4\eta \vertiii{x_0}_{X}<1,\]
there exists one and only one solution to the equation
\[x=x_0+\mathcal{B}(x,x) \qquad \text{ with } \vertiii{x}_{X} < \frac{1}{2\eta}.\]
Moreover, $\vertiii{x}_{X} \leq 2\vertiii{x_0}_{X}$.
\end{lemma}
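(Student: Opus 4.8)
The plan is to solve $x = x_0 + \mathcal{B}(x,x)$ by the contraction mapping principle applied to the map $\Phi(x) = x_0 + \mathcal{B}(x,x)$ on a suitably chosen closed ball of $X$. The only inputs are the bilinear bound $\vertiii{\mathcal{B}(u,v)}_X \le \eta \vertiii{u}_X \vertiii{v}_X$ (which holds since $\eta \ge \|\mathcal{B}\|_{X \times X \to X}$) and the bilinearity identity $\mathcal{B}(u,u) - \mathcal{B}(v,v) = \mathcal{B}(u, u-v) + \mathcal{B}(u-v, v)$.

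First I would fix a radius $R > 0$ and consider $\Phi$ on the closed ball $\{x : \vertiii{x}_X \le R\}$. For $\Phi$ to map this ball into itself it suffices that $\vertiii{x_0}_X + \eta R^2 \le R$, i.e.\ that $R$ lie between the two roots $R_\pm = \frac{1 \pm \sqrt{1 - 4\eta\vertiii{x_0}_X}}{2\eta}$ of $\eta R^2 - R + \vertiii{x_0}_X = 0$; the hypothesis $4\eta\vertiii{x_0}_X < 1$ guarantees these roots are real. For $\Phi$ to be a contraction, the bilinear identity gives $\vertiii{\Phi(x) - \Phi(y)}_X \le \eta(\vertiii{x}_X + \vertiii{y}_X)\vertiii{x-y}_X \le 2\eta R \vertiii{x-y}_X$, so a strict contraction requires $R < \frac{1}{2\eta}$. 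Since the smaller root satisfies $R_- < \frac{1}{2\eta}$, choosing $R = R_-$ meets both requirements, and the Banach fixed point theorem yields a unique fixed point $x$ in $\{x : \vertiii{x}_X \le R_-\}$.

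Next I would establish the quantitative bound. Using $1 - \sqrt{1-s} \le s$ for $s \in [0,1]$ with $s = 4\eta\vertiii{x_0}_X$, one gets $R_- = \frac{1 - \sqrt{1 - 4\eta\vertiii{x_0}_X}}{2\eta} \le 2\vertiii{x_0}_X$, so the solution just constructed satisfies $\vertiii{x}_X \le 2\vertiii{x_0}_X$. Equivalently, a Picard iteration $x^{(n+1)} = x_0 + \mathcal{B}(x^{(n)}, x^{(n)})$ started at $x^{(0)} = x_0$ stays in the ball of radius $2\vertiii{x_0}_X$ by induction, using $\vertiii{x_0}_X(1 + 4\eta\vertiii{x_0}_X) \le 2\vertiii{x_0}_X$, and is Cauchy with contraction factor $4\eta\vertiii{x_0}_X < 1$; this produces the same bound directly, and may be the cleaner route to present.

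Finally, the statement claims uniqueness in the larger region $\{\vertiii{x}_X < \frac{1}{2\eta}\}$, not merely in the ball of radius $R_-$ on which existence is obtained, so this must be checked separately. If $x$ and $y$ both solve the equation with $\vertiii{x}_X, \vertiii{y}_X < \frac{1}{2\eta}$, then the same bilinear identity gives $\vertiii{x - y}_X \le \eta(\vertiii{x}_X + \vertiii{y}_X)\vertiii{x-y}_X$ with $\eta(\vertiii{x}_X + \vertiii{y}_X) < 1$, forcing $x = y$. I do not expect any serious obstacle in this lemma; the only point requiring care is the distinction between the radius $R_-$ on which existence is obtained and the wider radius $\frac{1}{2\eta}$ on which uniqueness is asserted, which is precisely why the uniqueness argument must be run independently of the contraction used to produce the solution.
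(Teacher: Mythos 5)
Your proof is correct, and it is the standard contraction-mapping/Picard-iteration argument; the paper itself does not prove this lemma but simply cites Cannone's book (Lemma 1.2.6) and related references, where essentially this same argument appears. Your attention to the distinction between the existence ball of radius $R_-$ and the larger uniqueness region $\vertiii{x}_{X}<\frac{1}{2\eta}$, handled by a separate a posteriori uniqueness estimate, is exactly the right point of care.
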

See \cite[p. 37, Lemma 1.2.6]{Cannone1995} and \cite{AuscherTchamitchian1999,Cannone2003}.

We may write the mild formulation of the Kuramoto-Sivashinsky equation \eqref{KSEquationPsi} as
\begin{equation}\label{mildKS}
\psi=S\psi_{0}-\frac{1}{2}B(\psi,\psi).
\end{equation}
Here, the semigroup operator is
\begin{equation}\label{semigroup}
S\psi_{0}=e^{-t\sigma(\Delta^{2}+\Delta)(k)}\psi_{0},
\end{equation}
and the bilinear term is
\begin{equation}\label{bilinop}
B(F,G)=\int_{0}^{t}e^{-(t-s)(\Delta^{2}+\Delta)}\mathbb{P}(\nabla F\cdot\nabla G)\ ds.
\end{equation}
The Fourier coefficients of $B(F,G)$ are
\begin{multline}\label{bilinearTransform}
\widehat{B(F,G)}(t,k)=\int_{0}^{t}e^{-(t-s)\sigma(\Delta^{2}+\Delta)(k)}\mathcal{F}[\mathbb{P}(\nabla F\cdot\nabla G)](s,k)\ ds
\\
= \int_{0}^{t}e^{-(t-s)\sigma(\Delta^{2}+\Delta)(k)}\sum_{j\in\mathbb{Z}^{n}_{*}, j\neq k}\sum_{\ell=1}^{n}
\frac{2\pi i}{L_{\ell}}(k_{\ell}-j_{\ell})\hat{F}(s,k-j)\frac{2\pi i}{L_{\ell}} j_{\ell}\hat{G}(s,j)\ ds
\\
=
-\int_{0}^{t}e^{-(t-s)\sigma(\Delta^{2}+\Delta)(k)}\sum_{j\in\mathbb{Z}^{n}_{*}, j\neq k}\sum_{i=1}^{n}
\frac{4\pi^2}{L_{i}^2}(k_{i}-j_{i})\hat{F}(s,k-j)j_{i}\hat{G}(s,j)\ ds.
\end{multline}

In all of the estimates we will perform we will use only bounds from above, with respect to the frequency variable $k$, of $|\widehat{B(F,G)}(t,k)|$. We thus ignore, hereafter, the constants $\displaystyle{\frac{4\pi^2}{L_{i}^2}}$, absorbing them into a positive constant $C$, which is then normalized to $1$.

\section{Existence of solutions with data in $Y^{-1}$}\label{wienerDataBilinearSection}

In this section, we will prove the following theorem, giving existence of solutions with initial data taken from the
space $Y^{-1}.$

\begin{theorem} \label{existY-1} Let $T>0$ be given.  (If the conditions of Case A hold, then $T$ may be
taken to be $T=\infty.$)
Let $n\geq1.$  There exists $\varepsilon>0$ such that for any $\phi_{0}$ with $\mathbb{P}\phi_{0}\in Y^{-1},$ if
$\|\mathbb{P}\phi_{0}\|_{Y^{-1}}<\varepsilon,$ then there exists $\phi$ with 
$\mathbb{P}\phi\in\mathcal{Y}^{-1}\cap \mathcal{X}^{3}$ such that $\phi$ is a mild solution
to the initial value problem \eqref{KSEquationOriginal}, \eqref{data}.
\end{theorem}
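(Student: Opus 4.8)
The plan is to set up the mild formulation \eqref{mildKS} as a fixed-point problem in the Banach space $X=\mathcal{Y}^{-1}\cap\mathcal{X}^{3}$ equipped with $\vertiii{f}_{X}=\|f\|_{\mathcal{Y}^{-1}}+\|f\|_{\mathcal{X}^{3}}$, and to invoke the abstract Lemma \ref{fixedpoint} with $x_{0}=S\psi_{0}$ and bilinear operator $\mathcal{B}(F,G)=-\tfrac12 B(F,G)$. Since $\mathbb{P}\phi_{0}\in Y^{-1}$ and the mean of $\phi$ does not affect the evolution of $\psi=\mathbb{P}\phi$, it suffices to produce $\psi\in X$ solving \eqref{mildKS}; the mean is then restored to recover $\phi$, and $\mathbb{P}\phi=\psi\in\mathcal{Y}^{-1}\cap\mathcal{X}^{3}$ as required. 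The entire argument thus reduces to a linear bound $\vertiii{S\psi_{0}}_{X}\leq C\|\psi_{0}\|_{Y^{-1}}$ and a bilinear bound $\vertiii{B(F,G)}_{X}\leq C\vertiii{F}_{X}\vertiii{G}_{X}$. Granting these, the hypothesis $\|\mathbb{P}\phi_{0}\|_{Y^{-1}}<\varepsilon$ forces $4\eta\vertiii{x_{0}}_{X}<1$ for $\eta=\tfrac12\|B\|_{X\times X\to X}$, and Lemma \ref{fixedpoint} yields a unique small solution with $\vertiii{\psi}_{X}\leq 2\vertiii{S\psi_{0}}_{X}$.

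For the linear estimate I would split $\mathbb{Z}^{n}_{*}=\Omega_{F}\cup\Omega_{I}$. On $\Omega_{I}$ the parabolic gain \eqref{M2} gives $\int_{0}^{T}e^{-t\sigma(\Delta^{2}+\Delta)(k)}\,dt\leq (M_{2}|k|^{4})^{-1}$, so the weight $|k|^{3}$ of the $\mathcal{X}^{3}$ norm is converted into the $|k|^{-1}$ weight of $Y^{-1}$; the $\mathcal{Y}^{-1}$ part is immediate from \eqref{M1}. On the finite set $\Omega_{F}$ one bounds the exponential crudely by $M_{1}$ and uses $|k|\leq M_{3}$ from \eqref{M3} together with the finiteness of $T$ (this is precisely where Case B requires $T<\infty$), writing $|k|^{3}\leq M_{3}^{4}|k|^{-1}$ to again recover a $Y^{-1}$-type weight. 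Summing gives $\vertiii{S\psi_{0}}_{X}\leq C\|\psi_{0}\|_{Y^{-1}}$ with $C=C(M_{1},M_{2},M_{3},T)$.

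The heart of the matter, and where I expect the main obstacle, is the bilinear estimate. Starting from \eqref{bilinearTransform} and the elementary bound $\bigl|\sum_{i}(k_{i}-j_{i})j_{i}\bigr|\leq|k-j|\,|j|$, both the $\mathcal{Y}^{-1}$ norm and (after Fubini and the $\Omega_{I}$ gain, exactly as in the linear step) the $\Omega_{I}$ part of the $\mathcal{X}^{3}$ norm of $B(F,G)$ reduce to controlling
\[
\sum_{k}|k|^{-1}\int_{0}^{T}\sum_{j}|k-j|\,|j|\,|\hat F(s,k-j)|\,|\hat G(s,j)|\,ds.
\]
The difficulty is that $\mathcal{Y}^{-1}$ is too weak to absorb the derivative landing on its factor, so neither factor may simply be placed in $\mathcal{Y}^{-1}$. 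The resolution, which is the crux of the two-norm method, is to estimate the time integral by Hölder as $(\sup_{s}|\cdot|)(\int_{0}^{T}|\cdot|\,ds)$, assigning the sup-in-time $\mathcal{Y}^{-1}$ control to one factor and the integral-in-time $\mathcal{X}^{3}$ control to the other, and to choose the assignment by a frequency splitting $\mathbb{Z}^{n}_{*}=\{|j|\leq|k-j|\}\cup\{|j|>|k-j|\}$. On $\{|j|\leq|k-j|\}$ one places the higher mode $F$ in $\mathcal{X}^{3}$ and $G$ in $\mathcal{Y}^{-1}$, producing the kernel $|k|^{-1}|k-j|^{-2}|j|^{2}\leq1$; on the complementary region one reverses the roles to obtain $|k|^{-1}|k-j|^{2}|j|^{-2}\leq1$.

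In each region the kernel is uniformly bounded, so the double sum collapses by Young's inequality for $\ell^{1}$ convolution to $\|F\|_{\mathcal{X}^{3}}\|G\|_{\mathcal{Y}^{-1}}+\|F\|_{\mathcal{Y}^{-1}}\|G\|_{\mathcal{X}^{3}}\leq 2\vertiii{F}_{X}\vertiii{G}_{X}$. The remaining finite set $\Omega_{F}$ in the $\mathcal{X}^{3}$ norm is handled crudely as in the linear step, bounding the exponential by $M_{1}$ and writing $|k|^{3}\leq M_{3}^{4}|k|^{-1}$ to reuse the same bilinear bound, at the cost of a factor $T$. Combining the pieces yields $\vertiii{B(F,G)}_{X}\leq C\vertiii{F}_{X}\vertiii{G}_{X}$, which together with the linear bound closes the fixed-point argument and produces the asserted solution $\mathbb{P}\phi=\psi\in\mathcal{Y}^{-1}\cap\mathcal{X}^{3}$.
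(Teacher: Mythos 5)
Your proposal is correct and follows essentially the same route as the paper: the fixed-point lemma in $\mathcal{Y}^{-1}\cap\mathcal{X}^{3}$, the linear bound via the $\Omega_{F}\cup\Omega_{I}$ split with the parabolic gain \eqref{M2} on $\Omega_{I}$ and the crude $M_{1},M_{3},T$ bound on $\Omega_{F}$, and a cross-term bilinear estimate pairing $\mathcal{X}^{3}$ on the high mode with $\mathcal{Y}^{-1}$ on the low mode. The only (cosmetic) difference is that you split frequencies explicitly into $\{|j|\leq|k-j|\}$ and its complement and use $|k|\geq1$ to bound the kernel, whereas the paper achieves the same splitting via the Lei--Lin inequality $1\leq\frac{|k-j|}{|j|}+\frac{|j|}{|k-j|}$ together with the triangle-inequality bounds \eqref{simpleKJBound}.
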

\begin{proof}
To use Lemma \ref{fixedpoint}, we need to establish the bilinear estimate, and also that
$x_{0}=S\mathbb{P}\phi_{0}\in\mathcal{Y}^{-1}\cap\mathcal{X}^{3}.$

For the semigroup, we let $\psi_{0}\in Y^{-1}$ be given and we must show $S\psi_{0}\in\mathcal{Y}^{-1}\cap\mathcal{X}^{3}.$
We begin by computing the norm in $\mathcal{Y}^{-1}:$
\begin{equation}\nonumber
\|S\psi_{0}\|_{\mathcal{Y}^{-1}}=\sum_{k\in\mathbb{Z}^{n}_{*}}\sup_{t\in[0,T]}\frac{e^{-t\sigma(\Delta^{2}+\Delta)(k)}}{|k|}
|\hat{\psi}_{0}(k)|
\leq M_{1}\|\psi_{0}\|_{Y^{-1}}.
\end{equation}
We next compute the norm in $\mathcal{X}^{3}:$
\begin{multline}\nonumber
\|S\psi_{0}\|_{\mathcal{X}^{3}}=\sum_{k\in\mathbb{Z}^{n}_{*}}\int_{0}^{T}|k|^{3}
e^{-t\sigma(\Delta^{2}+\Delta)(k)}|\hat{\psi}_{0}(k)|\ dt
\\
\leq\left(\sum_{k\in\mathbb{Z}^{n}_{*}}\frac{|\hat{\psi}_{0}(k)|}{|k|}\right)
\sup_{k\in\mathbb{Z}^{n}_{*}}\left(\frac{|k|^{4}(1-e^{-T\sigma(\Delta^{2}+\Delta)(k)})}{\sigma(\Delta^{2}+\Delta)(k)}\right).
\end{multline}
In Case A, the supremum is finite because we may neglect the exponential and $\sigma(\Delta^{2}+\Delta)(k)>M_{2}|k|^{4}.$
In Case B, we may take the supremum separately over the sets $\Omega_{F}$ and $\Omega_{I},$ and the reasoning from
Case A applies to the supremum over $\Omega_{I}.$  For the supremum over $\Omega_{F},$ we find that it is finite because
$k$ is in a bounded set and $T$ is finite.  In either case, we have concluded that there exists $C>0$ such that $S\psi_{0}\in\mathcal{X}^{3}$ and
\begin{equation}\label{Spsi0estY-1}
  \|S\psi_{0}\|_{\mathcal{X}^{3} } \leq C \| \psi_0\|_{Y^{-1}}.
\end{equation}
This completes the proof of the needed semigroup properties.

We next need to compute $\|B(F,G)\|_{\mathcal{Y}^{-1}}$ and $\|B(F,G)\|_{\mathcal{X}^{3}}.$
We begin to compute the norm in $\mathcal{Y}^{-1}:$
\begin{equation}\nonumber
\|B(F,G)\|_{\mathcal{Y}^{-1}}=\sum_{k\in\mathbb{Z}^{n}_{*}}\sup_{t\in[0,T]}\frac{|\widehat{B(F,G)}(t,k)|}{|k|}.
\end{equation}
We substitute from \eqref{bilinearTransform}, and make some elementary bounds, arriving at
\begin{equation}\nonumber
\|B(F,G)\|_{\mathcal{Y}^{-1}}\leq nM_{1}\sum_{k\in\mathbb{Z}^{n}_{*}}
\int_{0}^{T}
\sum_{j\in\mathbb{Z}^{n}_{*}}\frac{1}{|k|}|k-j||\hat{F}(s,k-j)|j||\hat{G}(s,j)|\ ds.
\end{equation}
As in \cite{leiLin}, we use the inequality
\begin{equation}\label{simpleInequality}
1\leq \frac{|k-j|}{|j|}+\frac{|j|}{|k-j|}.
\end{equation}
Using this, we find
\begin{multline}\nonumber
\|B(F,G)\|_{\mathcal{Y}^{-1}}\leq nM_{1}\sum_{k\in\mathbb{Z}^{n}_{*}}\int_{0}^{T}\sum_{j\in\mathbb{Z}^{n}_{*}}
\frac{|k-j|^{2}}{|k|}|\hat{F}(s,k-j)||\hat{G}(s,j)|\ ds
\\
+nM_{1}\sum_{k\in\mathbb{Z}^{n}_{*}}\int_{0}^{T}\sum_{j\in\mathbb{Z}^{n}_{*}}
|\hat{F}(s,k-j)| \frac{|j|^{2}}{|k|}|\hat{G}(s,j)|\ ds.
\end{multline}
We then multiply and divide both terms by $|k-j||j|,$ arriving at
\begin{multline}\nonumber
\|B(F,G)\|_{\mathcal{Y}^{-1}}\leq nM_{1}\sum_{k\in\mathbb{Z}^{n}_{*}}\int_{0}^{T}\sum_{j\in\mathbb{Z}^{n}_{*}}
|k-j|^{3}|\hat{F}(s,k-j)|\frac{|\hat{G}(s,j)|}{|j|}\left(\frac{|j|}{|k||k-j|}\right)\ ds
\\
+nM_{1}\sum_{k\in\mathbb{Z}^{n}_{*}}\int_{0}^{T}\sum_{j\in\mathbb{Z}^{n}_{*}}
\frac{|\hat{F}(s,k-j)|}{|k-j|} |j|^{3}|\hat{G}(s,j)|\left(\frac{|k-j|}{|k||j|}\right)\ ds.
\end{multline}
We note the elementary bounds
\begin{equation}\label{simpleKJBound}
\frac{|j|}{|k||k-j|}\leq\frac{|k|+|k-j|}{|k||k-j|}\leq 2,\qquad
\frac{|k-j|}{|k||j|}\leq\frac{|k|+|j|}{|k||j|}\leq 2.
\end{equation}

This, then, immediately yields the bound
\begin{multline}\label{BFGY-1}
\|B(F,G)\|_{\mathcal{Y}^{-1}}\leq 2nM_{1}\|F\|_{\mathcal{X}^{3}}\|G\|_{\mathcal{Y}^{-1}}
+2nM_{1}\|F\|_{\mathcal{Y}^{-1}}\|G\|_{\mathcal{X}^{3}}.
\\
\leq 2nM_{1}(\|F\|_{\mathcal{Y}^{-1}}+\|F\|_{\mathcal{X}^{3}})
(\|G\|_{\mathcal{Y}^{-1}}+\|G\|_{\mathcal{X}^{3}}).
\end{multline}

We next will consider the higher norm of $B(F,G),$ attempting to bound $\|B(F,G)\|_{\mathcal{X}^{3}}.$
To begin we have
\begin{multline}\nonumber
\|B(F,G)\|_{\mathcal{X}^{3}}=\sum_{k\in\mathbb{Z}^{n}_{*}}\int_{0}^{T}|k|^{3}|\widehat{B(F,G)}(k)|\ dt =
\\
\sum_{k\in\mathbb{Z}^{n}_{*}}\int_{0}^{T}\left|\int_{0}^{t}|k|^{3}e^{-(t-s)\sigma(\Delta^{2}+\Delta)(k)}
\sum_{j\in\mathbb{Z}^{n}_{*}}\sum_{i=1}^{n}(k_{i}-j_{i})\hat{F}(s,k-j)j_{i}\hat{G}(s,j)\ ds\right|\ dt.
\end{multline}
We use the triangle inequality and \eqref{simpleInequality}, finding
\begin{multline}\nonumber
\|B(F,G)\|_{\mathcal{X}^{3}}\leq
\\
\sum_{k\in\mathbb{Z}^{n}_{*}}\int_{0}^{T}\int_{0}^{t}|k|^{3}e^{-(t-s)\sigma(\Delta^{2}+\Delta)(k)}\sum_{j\in\mathbb{Z}^{n}_{*}}
|k-j|^{2} |\hat{F}(s,k-j)| |\hat{G}(s,j)|\ ds dt
\\
+
\sum_{k\in\mathbb{Z}^{n}_{*}}\int_{0}^{T}\int_{0}^{t}|k|^{3}e^{-(t-s)\sigma(\Delta^{2}+\Delta)(k)}\sum_{j\in\mathbb{Z}^{n}_{*}}
|\hat{F}(s,k-j)| |j|^{2} |\hat{G}(s,j)|\ ds dt
\\
=A_{1}+A_{2}.
\end{multline}
We will only include the details for the estimate of $A_{1},$ as the estimate of $A_{2}$ is exactly the same.
We decompose $A_{1}$ further, using the decomposition $\mathbb{Z}^{n}_{*}=\Omega_{F}\cup\Omega_{I}.$  We have
\begin{multline}\nonumber
A_{1}=\sum_{k\in\Omega_{F}}\int_{0}^{T}\int_{0}^{t}|k|^{3}e^{-(t-s)\sigma(\Delta^{2}+\Delta)(k)}\sum_{j\in\mathbb{Z}^{n}_{*}}
|k-j|^{2} |\hat{F}(s,k-j)| |\hat{G}(s,j)|\ ds dt
\\
+ \sum_{k\in\Omega_{I}}\int_{0}^{T}\int_{0}^{t}|k|^{3}e^{-(t-s)\sigma(\Delta^{2}+\Delta)(k)}\sum_{j\in\mathbb{Z}^{n}_{*}}
|k-j|^{2} |\hat{F}(s,k-j)| |\hat{G}(s,j)|\ ds dt
\\
=A_{3}+A_{4}.
\end{multline}

We use the definitions of $M_{1}$, \eqref{M1}, and $M_{3}$, \eqref{M3}, to immediately bound $A_{3}$ as
\begin{equation}\nonumber
A_{3}\leq M_{1}M_{3}^{3}T\int_{0}^{T}\sum_{k\in\mathbb{Z}^{n}_{*}}\sum_{j\in\mathbb{Z}^{n}_{*}}
|k-j|^{2} |\hat{F}(s,k-j)| \left[\sup_{\tau\in[0,T]}|\hat{G}(\tau,j)|\right]\ ds.
\end{equation}
We then multiply and divide by $|k-j||j|$ and rearrange,  finding
\begin{multline}\nonumber
A_{3}\leq
M_{1}M_{3}^{3}(M_{3}+1) T\int_{0}^{T}\sum_{k\in\mathbb{Z}^{n}_{*}}\sum_{j\in\mathbb{Z}^{n}_{*}}
|k-j|^{3} |\hat{F}(s,k-j)| \left[\sup_{\tau\in[0,T]}\frac{|\hat{G}(\tau,j)}{|j|}\right]\ ds.
\\
\leq M_{1}M_{3}^{4}(M_{3}+1)T\|F\|_{\mathcal{X}^{3}}\|G\|_{\mathcal{Y}^{-1}}.
\end{multline}
Here we have used the definition of $M_{3}$ to make the elementary bound
\begin{equation}\nonumber
\sup_{k\in\Omega_{F}}\sup_{j\in\mathbb{Z}^{n},\ j\neq k}\frac{|j|}{|k-j|}
= \sup_{k\in\Omega_{F}}\sup_{\ell\in\mathbb{Z}^{n},\ell\neq0} \frac{|\ell+k|}{|\ell|}\leq
M_{3}+1.
\end{equation}

We next bound $A_{4}.$  We exchange the order of integration and compute the integral with respect to $t,$ finding
\begin{multline}\nonumber
A_{4}=\sum_{k\in\Omega_{I}}\int_{0}^{T}\int_{s}^{T}|k|^{3}e^{-(t-s)\sigma(\Delta^{2}+\Delta)(k)}
\sum_{j\in\mathbb{Z}^{n}_{*}}|k-j|^{2} |\hat{F}(s,k-j)| |\hat{G}(s,j)|\ dt ds
\\
=\sum_{k\in\Omega_{I}}\int_{0}^{T}\frac{|k|^{4}}{|k|}
\frac{1-e^{-(T-s)\sigma(\Delta^{2}+\Delta)(k)}}{\sigma(\Delta^{2}+\Delta)(k)}
\sum_{j\in\mathbb{Z}^{n}_{*}}|k-j|^{2} |\hat{F}(s,k-j)| |\hat{G}(s,j)|\ ds.
\end{multline}
Since $\sigma(\Delta^{2}+\Delta)(k)$ is positive for $k\in\Omega_{I},$ we may neglect the exponential in the numerator,
and use the definition of $M_{2},$ finding
\begin{equation}\nonumber
A_{4}\leq \frac{1}{M_{2}}\int_{0}^{T}\sum_{k\in\mathbb{Z}^{n}_{*}}\sum_{j\in\mathbb{Z}^{n}_{*}}
\frac{|k-j|^{2}}{|k|} |\hat{F}(s,k-j)| |\hat{G}(s,j)| \ ds.
\end{equation}
We then multiply and divide by $|k-j||j|,$ and again use \eqref{simpleKJBound}, finding
\begin{multline}\nonumber
A_{4}\leq\frac{1}{M_{2}}\int_{0}^{T}\sum_{k\in\mathbb{Z}^{n}_{*}}\sum_{j\in\mathbb{Z}^{n}_{*}}
|k-j|^{3} |\hat{F}(s,k-j)| \frac{|\hat{G}(s,j)|}{|j|}\left(\frac{|j|}{|k||k-j|}\right) \ ds
\\
\leq \frac{1}{M_{2}}\|F\|_{\mathcal{X}^{3}}\|G\|_{\mathcal{Y}^{-1}}.
\end{multline}

We have concluded the bound
\begin{equation}\nonumber
A_{1}\leq\left(M_{1}M_{3}^{3}(M_{3}+1)T+\frac{1}{M_{2}}\right)\|F\|_{\mathcal{X}^{3}}\|G\|_{\mathcal{Y}^{-1}},
\end{equation}
and we have by symmetry the corresponding estimate for $A_{2},$ namely
\begin{equation}\nonumber
A_{2}\leq\left(M_{1}M_{3}^{3}(M_{3}+1)T+\frac{1}{M_{2}}\right)\|F\|_{\mathcal{Y}^{-1}}\|G\|_{\mathcal{X}^{3}}.
\end{equation}
These bounds immediately imply the desired conclusion, which is
\begin{equation}\label{BFGestY-1X3}
\|B(F,G)\|_{\mathcal{X}^{3}}\leq \left(M_{1}M_{3}^{3}(M_{3}+1)T
+\frac{1}{M_{2}}\right)(\|F\|_{\mathcal{Y}^{-1}}+\|F\|_{\mathcal{X}^{3}})
(\|G\|_{\mathcal{Y}^{-1}}+\|G\|_{\mathcal{X}^{3}}).
\end{equation}
(It is understood that if the set $\Omega_{F}$ is empty then we may take $T=\infty,$ and that in this case
the combination $M_{1}T$ is understood as $M_{1}T=0.$)

\end{proof}

\section{Existence of solutions with data in pseudomeasure spaces}\label{pseudomeasureSection}

We prove existence theorems for the Kuramoto-Sivashinsky equation with pseudomeasure data in dimensions
$n=1$ and $n=2.$  We first define the pseudomeasure spaces.

For any $m\in\mathbb{R},$ we define the sets $PM^{m}$ and $\mathcal{PM}^{m}$ by their norms,
\begin{equation}\nonumber
\|f\|_{PM^{m}}=|\hat{f}(0)|+\sup_{k\in\mathbb{Z}^{n}}|k|^{m}|\hat{f}(k)|,
\end{equation}
\begin{equation}\nonumber
\|f\|_{\mathcal{PM}^{m}}=\sup_{t\in[0,T]}|\hat{f}(t,0)|+\sup_{t\in[0,T]}\sup_{k\in\mathbb{Z}^{n}}|k|^{m}|\hat{f}(t,k)|.
\end{equation}
As before, we will hereafter assume that all functions considered have zero mean, and will therefore only
need to use these norms on spaces of functions with zero mean.  We then get the simpler expressions
\begin{equation}\nonumber
\|f\|_{PM^{m}}=\sup_{k\in\mathbb{Z}^{n}_{*}}|k|^{m}|\hat{f}(k)|,
\end{equation}
\begin{equation}\nonumber
\|f\|_{\mathcal{PM}^{m}}=\sup_{t\in[0,T]}\sup_{k\in\mathbb{Z}^{n}_{*}}|k|^{m}|\hat{f}(t,k)|.
\end{equation}

In Section \ref{pseudomeasureLinear} we give the linear estimates, which are relevant for both dimensions $n=1$
and $n=2.$  In Section \ref{1DSection} we state the existence theorem in dimension $n=1$ and demonstrate
the needed bilinear estimates.  Then in Section \ref{2DSection} we state the existence theorem for
dimension $n=2$ and again give the needed bilinear estimates.

\subsection{Linear estimates}\label{pseudomeasureLinear}

We give the linear estimates for pseudomeasure data in the following two lemmas.

\begin{lemma}\label{firstSemigroupLemma}
For any $m\in\mathbb{R},$ the semigroup operator $S$ satisfies
$S:PM^{m}\rightarrow\mathcal{PM}^{m},$ with the estimate
\begin{equation}\nonumber
\|S\psi_{0}\|_{\mathcal{PM}^{m}}\leq M_{1}\|\psi_{0}\|_{{PM}^{m}}.
\end{equation}
\end{lemma}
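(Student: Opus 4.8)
The plan is to proceed by a direct computation at the level of Fourier coefficients, since the semigroup acts diagonally in frequency. From the definition \eqref{semigroup}, the Fourier coefficients of $S\psi_{0}$ are $\widehat{S\psi_{0}}(t,k)=e^{-t\sigma(\Delta^{2}+\Delta)(k)}\hat{\psi}_{0}(k)$. First I would substitute this into the (zero-mean form of the) $\mathcal{PM}^{m}$ norm, obtaining
\begin{equation}\nonumber
\|S\psi_{0}\|_{\mathcal{PM}^{m}}=\sup_{t\in[0,T]}\sup_{k\in\mathbb{Z}^{n}_{*}}|k|^{m}e^{-t\sigma(\Delta^{2}+\Delta)(k)}|\hat{\psi}_{0}(k)|.
\end{equation}

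The key step is then to separate the exponential factor from the rest. Since the exponential is nonnegative and satisfies the uniform bound \eqref{M1}, namely $e^{-t\sigma(\Delta^{2}+\Delta)(k)}\leq M_{1}$ for all $t\in[0,T]$ and all $k\in\mathbb{Z}^{n}_{*}$, I would pull $M_{1}$ out of the joint supremum, leaving
\begin{equation}\nonumber
\|S\psi_{0}\|_{\mathcal{PM}^{m}}\leq M_{1}\sup_{k\in\mathbb{Z}^{n}_{*}}|k|^{m}|\hat{\psi}_{0}(k)|=M_{1}\|\psi_{0}\|_{PM^{m}},
\end{equation}
which is exactly the claimed estimate and shows in particular that $S\psi_{0}\in\mathcal{PM}^{m}$ whenever $\psi_{0}\in PM^{m}$.

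There is essentially no serious obstacle here; the only point requiring care is that the argument must cover both Case A and Case B uniformly, and this is precisely why the quantity $M_{1}$ was introduced in \eqref{M1}. In Case A one has $T=\infty$ and $M_{1}=1$ (the symbol $\sigma(\Delta^{2}+\Delta)(k)$ being positive on all of $\mathbb{Z}^{n}_{*}$ so the exponential never exceeds $1$), while in Case B the finitely many frequencies in $\Omega_{F}$ with nonpositive symbol are controlled over the finite time interval $[0,T]$, again by the single constant $M_{1}$. Invoking \eqref{M1} rather than reasoning case-by-case keeps the computation clean and makes the estimate valid for every $m\in\mathbb{R}$, since the weight $|k|^{m}$ plays no role beyond being carried through unchanged.
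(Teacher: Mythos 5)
Your proof is correct and follows essentially the same route as the paper: write the $\mathcal{PM}^{m}$ norm of $S\psi_{0}$ explicitly in Fourier variables and bound the exponential factor uniformly by $M_{1}$ using \eqref{M1}. The only cosmetic difference is the order in which the two suprema are separated, which does not affect the argument.
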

\begin{proof}
We begin to estimate $\|S\psi_{0}\|_{\mathcal{PM}^{m}}.$  We have
\begin{multline}\nonumber
\|S\psi_{0}\|_{\mathcal{PM}^{m}}=\sup_{t\in[0,T]}\sup_{k\in\mathbb{Z}^{n}_{*}}|k|^{m}e^{-t\sigma(\Delta^{2}+\Delta)(k)}
|\hat{\psi}_{0}(k)|
\\
\leq \|\psi_{0}\|_{PM^{m}}\sup_{t\in[0,T]}\sup_{k\in\mathbb{Z}^{N}_{*}}e^{-t\sigma(\Delta^{2}+\Delta)(k)}.
\end{multline}
Using the definition of $M_{1},$ then, this is
\begin{equation}\nonumber
\|S\psi_{0}\|_{\mathcal{PM}^{m}}\leq M_{1}\|\psi_{0}\|_{PM^{m}}.
\end{equation}
\end{proof}

We next estimate $\|S\psi_{0}\|_{\mathcal{X}^{m_{2}}},$ with $\psi_{0}\in PM^{m_{1}}.$
\begin{lemma}\label{secondSemigroupLemma}
Let $m_{1}$ and $m_{2}$ be real numbers satisfying
$m_{2}-m_{1}-4<-n.$  There exists $K>0$ such that for any $\psi_{0}\in PM^{m-1},$ we have
$S\psi_{0}\in\mathcal{X}^{m_{2}}$
with the estimate
\begin{equation}\nonumber
\|S\psi_{0}\|_{\mathcal{X}^{m_{2}}}\leq K\|\psi_{0}\|_{PM^{m_{1}}}.
\end{equation}
\end{lemma}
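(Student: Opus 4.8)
The plan is to mirror the computation of $\|S\psi_{0}\|_{\mathcal{X}^{3}}$ carried out in the proof of Theorem \ref{existY-1}, but adapted to the pseudomeasure (supremum-based) norm on the data. First I would write out the norm directly from the zero-mean definition \eqref{x2DefinitionZeroMean} together with the formula \eqref{semigroup} for the semigroup, giving
\[
\|S\psi_{0}\|_{\mathcal{X}^{m_{2}}}=\sum_{k\in\mathbb{Z}^{n}_{*}}\int_{0}^{T}|k|^{m_{2}}e^{-t\sigma(\Delta^{2}+\Delta)(k)}|\hat{\psi}_{0}(k)|\ dt.
\]
Since $\psi_{0}\in PM^{m_{1}}$, I would bound $|\hat{\psi}_{0}(k)|\leq|k|^{-m_{1}}\|\psi_{0}\|_{PM^{m_{1}}}$ and pull the norm out, reducing the estimate to controlling
\[
\sum_{k\in\mathbb{Z}^{n}_{*}}|k|^{m_{2}-m_{1}}\int_{0}^{T}e^{-t\sigma(\Delta^{2}+\Delta)(k)}\ dt.
\]

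Next I would carry out the time integral, splitting the frequency sum according to the decomposition $\mathbb{Z}^{n}_{*}=\Omega_{F}\cup\Omega_{I}$. On $\Omega_{I}$ the symbol is positive, so the integral equals $(1-e^{-T\sigma(\Delta^{2}+\Delta)(k)})/\sigma(\Delta^{2}+\Delta)(k)$, which I would bound from above by $1/\sigma(\Delta^{2}+\Delta)(k)$ and then by $1/(M_{2}|k|^{4})$ using \eqref{M2}. The contribution from $\Omega_{I}$ is therefore controlled by $\frac{1}{M_{2}}\sum_{k\in\Omega_{I}}|k|^{m_{2}-m_{1}-4}$. On $\Omega_{F}$ (nonempty only in Case B, where $T<\infty$) the symbol is non-positive, so the time integral is bounded by $TM_{1}$ via the definition \eqref{M1}; since $\Omega_{F}$ is a finite set with $|k|\leq M_{3}$ by \eqref{M3}, this contribution is a finite sum and is trivially bounded regardless of the sign of $m_{2}-m_{1}$.

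The single substantive point, which is the main obstacle although a mild one, is the convergence of $\sum_{k\in\Omega_{I}}|k|^{m_{2}-m_{1}-4}$. Because this is a sum over a subset of $\mathbb{Z}^{n}_{*}$ of a power of $|k|$, it converges precisely when the exponent is strictly less than $-n$, that is, when $m_{2}-m_{1}-4<-n$, which is exactly the hypothesis of the lemma. This is the essential difference from the $Y^{-1}$ computation in Theorem \ref{existY-1}: there the $\ell^{1}$-type Wiener norm supplied summability and one only needed a supremum of $|k|^{4}/\sigma(\Delta^{2}+\Delta)(k)$, whereas here the supremum-based $PM^{m_{1}}$ norm forces the full summation over $k$ to be performed explicitly, and the tailored gap condition on $m_{2}-m_{1}$ is what guarantees it is finite. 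Collecting the two contributions then yields a constant $K$ depending on $M_{1},M_{2},M_{3},T,n$, and $m_{2}-m_{1}$ (with the convention that $M_{1}T$ is interpreted as $0$ in Case A, as in the proof of Theorem \ref{existY-1}) for which $\|S\psi_{0}\|_{\mathcal{X}^{m_{2}}}\leq K\|\psi_{0}\|_{PM^{m_{1}}}$, completing the proof.
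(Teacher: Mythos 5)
Your proposal is correct and follows essentially the same route as the paper's own proof: pull out the $PM^{m_{1}}$ norm, split the frequency sum over $\Omega_{F}\cup\Omega_{I}$, evaluate the time integral on $\Omega_{I}$ and bound it by $1/(M_{2}|k|^{4})$, and invoke the hypothesis $m_{2}-m_{1}-4<-n$ for convergence of the resulting series. Your added remark contrasting this with the $Y^{-1}$ case (where summability comes from the data rather than from the gap condition) is accurate but not part of the paper's argument.
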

\begin{proof} We begin by writing out the norm of $S\psi_{0}$ in the space $\mathcal{X}^{m_{2}}:$
\begin{equation}\nonumber
\|S\psi_{0}\|_{\mathcal{X}^{m_{2}}}=\int_{0}^{T}\sum_{k\in\mathbb{Z}^{n}_{*}}|k|^{m_{2}}e^{-t\sigma(\Delta^{2}+\Delta)(k)}
|\hat{\psi}_{0}(k)|\ dt.
\end{equation}
We multiply and divide the integrand by $|k|^{m_{1}},$ and we take out the norm of $\psi_{0},$ finding
\begin{equation}\nonumber
\|S\psi_{0}\|_{\mathcal{X}^{m_{2}}}\leq\|\psi_{0}\|_{PM^{m_{1}}}\int_{0}^{T}\sum_{k\in\mathbb{Z}^{n}_{*}}
|k|^{m_{2}-m_{1}}e^{-t(\sigma(\Delta^{2}+\Delta)(k))}\ dt.
\end{equation}
We then decompose $\mathbb{Z}^{n}_{*}$ into $\Omega_{F}\cup\Omega_{I},$ and we estimate the portion over the finite set
$\Omega_{F}.$  This yields
\begin{equation}\nonumber
\|S\psi_{0}\|_{\mathcal{X}^{m_{2}}}\leq\|\psi_{0}\|_{PM^{m_{1}}}\left(M_{1}C(\Omega_{F})T+\int_{0}^{T}\sum_{k\in\Omega_{I}}
|k|^{m_{2}-m_{1}}e^{-t\sigma(\Delta^{2}+\Delta)(k)}\ dt\right).
\end{equation}
(If we are in Case A, then since $\Omega_{F}=\emptyset,$ then in this case with $T=\infty$ we may take $C(\Omega_{F})=0,$
with the understanding that this would mean that $M_{1}C(\Omega_{F})T=0$ then.)
Then we evaluate the remaining integral, finding
\begin{equation}\nonumber
\|S\psi_{0}\|_{\mathcal{X}_{m_{2}}}\leq\|\psi_{0}\|_{PM^{m_{1}}}\left(M_{1}C(\Omega_{F})T+\sum_{k\in\Omega_{I}}
\frac{|k|^{m_{2}-m_{1}}(1-e^{-T(\sigma(\Delta^{2}+\Delta)(k))}}{\sigma(\Delta^{2}+\Delta)(k)}\right).
\end{equation}
We may then neglect the exponential and use \eqref{M2} to find
\begin{equation}\nonumber
\|S\psi_{0}\|_{\mathcal{X}^{m_{2}}}\leq\|\psi_{0}\|_{PM^{m_{1}}}\left(M_{1}C(\Omega_{F})T+\frac{1}{M_{2}}
\sum_{k\in\Omega_{I}}|k|^{m_{2}-m_{1}-4}\right).
\end{equation}
The series on the right-hand side converges, so we have concluded that there exists $K>0$ such that
\begin{equation}\nonumber
\|S\psi_{0}\|_{\mathcal{X}^{m_{2}}}\leq K\|\psi_{0}\|_{PM^{m_{1}}}.
\end{equation}
\end{proof}

\begin{remark}\label{mRemark}
The quantity $m_{2}-m_{1}$ describes how many derivatives the solution gains at positive times, compared to the data.
Of course one may expect, in $L^{1}$-based spaces, to gain four derivatives from a fourth-order parabolic evolution; at the same
time, because our nonlinearity only contains first derivatives, we do not require this full gain of four derivatives.
Our requirement $m_{2}-m_{1}-4<n$ implies that in one space dimension we may take $m_{2}-m_{1}<3,$ and in two space
dimensions we may take $m_{2}-m_{1}<2.$  In these cases this is sufficient gain of regularity to
establish the bilinear estimates.
\end{remark}

\subsection{Existence of solutions with pseudomeasure data with $n=1$}\label{1DSection}

In one space dimension, we can find the existence of solutions with $PM^{-p}$ data, for any $p\in(0,1/2).$
With the parabolic gain of regularity, we will also have that the solutions are in $\mathcal{X}^{2+p};$ this gain of
$2+2p$ derivatives is less than the four full derivatives which might be possible, but this is sufficient gain to deal with
the nonlinearity.
Note that these choices satisfy the constraints as discussed in Remark \ref{mRemark}.  Specifically, with
$m_{1}=-p$ and $m_{2}=2+p,$ and with $p<1/2,$ we have $m_{2}-m_{1}=2+2p<3,$ as desired.

\begin{theorem}\label{n=1PseudomeasureTheorem}
 Let $p\in(0,1/2)$  and  $T>0$ be given.
(If the conditions of Case A hold, then $T$ may be
taken to be $T=\infty.$)
Let $n=1.$  There exists $\varepsilon>0$ such that for any $\phi_{0}$ such that $\mathbb{P}\phi_{0}\in PM^{-p},$ if
$\|\mathbb{P}\phi_{0}\|_{PM^{-p}}<\varepsilon,$ then there exists $\phi$ with
$\mathbb{P}\phi\in\mathcal{PM}^{-p}\cap \mathcal{X}^{2+p}$
such that $\phi$ is a mild solution to the initial value problem \eqref{KSEquationOriginal}, \eqref{data}.
\end{theorem}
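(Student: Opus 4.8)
The plan is to apply the abstract fixed point result, Lemma \ref{fixedpoint}, to the mild formulation \eqref{mildKS} in the Banach space $X=\mathcal{PM}^{-p}\cap\mathcal{X}^{2+p}$ equipped with the norm $\vertiii{f}_{X}=\|f\|_{\mathcal{PM}^{-p}}+\|f\|_{\mathcal{X}^{2+p}}$, taking $x_{0}=S\mathbb{P}\phi_{0}$ and bilinear operator $\mathcal{B}(F,G)=-\tfrac12 B(F,G)$. The argument runs exactly parallel to the proof of Theorem \ref{existY-1}, with $\mathcal{Y}^{-1}$ replaced by $\mathcal{PM}^{-p}$ and $\mathcal{X}^{3}$ by $\mathcal{X}^{2+p}$. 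Two ingredients are needed: the linear bound $\vertiii{x_{0}}_{X}\le C\|\mathbb{P}\phi_{0}\|_{PM^{-p}}$ and the bilinear bound $\vertiii{B(F,G)}_{X}\le C\vertiii{F}_{X}\vertiii{G}_{X}$. Once both hold, choosing $\varepsilon$ small enough that $4\eta\vertiii{x_{0}}_{X}<1$, where $\eta$ dominates the operator norm of $\mathcal{B}$, yields the unique fixed point, which is the desired mild solution.

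I would first dispose of the linear term. The $\mathcal{PM}^{-p}$ bound on $x_{0}$ is immediate from Lemma \ref{firstSemigroupLemma} with $m=-p$. For the $\mathcal{X}^{2+p}$ bound I would invoke Lemma \ref{secondSemigroupLemma} with $m_{1}=-p$ and $m_{2}=2+p$; its hypothesis $m_{2}-m_{1}-4<-n$ reads $2p-2<-1$, which holds precisely because $p<1/2$ and $n=1$. This is the first place the restriction $p\in(0,1/2)$ enters, and it is exactly the summability threshold flagged in Remark \ref{mRemark}.

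The bulk of the work is the bilinear estimate, which I would carry out separately in the two norms following the template of Theorem \ref{existY-1}: starting from \eqref{bilinearTransform}, apply the splitting inequality \eqref{simpleInequality} to replace $|k-j||j|$ by either $|k-j|^{2}$ or $|j|^{2}$, then redistribute by multiplying and dividing by suitable powers so that one factor is measured in $\mathcal{X}^{2+p}$ (absorbing the time integral) and the other in $\mathcal{PM}^{-p}$ (absorbing a supremum). The decisive difference from the $Y^{-1}$ case is that the pseudomeasure norms are $\ell^{\infty}$-type: after extracting the sup-norm of one factor, say via $|\hat{G}(s,j)|\le|j|^{p}\|G\|_{\mathcal{PM}^{-p}}$, one is left not with a clean convolution but with a residual $j$-sum carrying extra powers of $|k|$, $|k-j|$, $|j|$. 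For the $\mathcal{PM}^{-p}$ bound these residual weights are harmless: the time factor is bounded by $M_{1}$ through \eqref{M1}, the elementary subadditivity $(|k|+|j|)^{p}\le|k|^{p}+|j|^{p}$ shows each residual weight such as $|k|^{-p}|k-j|^{-p}|j|^{p}$ is at most $2$, and the remaining $j$-sum reconstitutes the full $\mathcal{X}^{2+p}$ norm of the other factor. For the $\mathcal{X}^{2+p}$ bound I would decompose $\mathbb{Z}^{n}_{*}=\Omega_{F}\cup\Omega_{I}$ as in Theorem \ref{existY-1}; the finite piece $\Omega_{F}$ is controlled crudely using \eqref{M1}, \eqref{M3} and the finiteness of $T$, while on $\Omega_{I}$ the time integration produces the gain $1/\sigma(\Delta^{2}+\Delta)(k)\le 1/(M_{2}|k|^{4})$ from \eqref{M2}.

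The main obstacle, and the second essential use of $p<1/2$, lies in the $\Omega_{I}$ piece of the $\mathcal{X}^{2+p}$ estimate. After the reductions above one is reduced to bounding, uniformly in the frequency $\ell$ of one factor, a sum of the schematic form $|\ell|^{-p}\sum_{j}|\ell+j|^{p-2}|j|^{p}$; the $\mathcal{X}^{2+p}$ norm of the other factor can be extracted only if this supremum over $\ell$ is finite. For large $|j|$ the summand behaves like $|j|^{2p-2}$, so the series converges in dimension $n=1$ exactly when $2p-2<-1$, that is $p<1/2$, while the finitely many terms near $j=-\ell$ (where $|k|=|\ell+j|$ is small but bounded below by $1$ since $k\ne0$) are handled separately. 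Verifying that this supremum is finite and uniform in $\ell$ is the one genuinely new computation relative to the $Y^{-1}$ theorem; everything else is bookkeeping. Collecting the two bilinear bounds into a single estimate $\vertiii{B(F,G)}_{X}\le C\vertiii{F}_{X}\vertiii{G}_{X}$ and feeding $\eta\ge\tfrac12\|B\|_{X\times X\to X}$ into Lemma \ref{fixedpoint} then completes the proof.
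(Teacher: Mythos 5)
Your proposal is correct and follows essentially the same route as the paper: the fixed point Lemma \ref{fixedpoint} in $\mathcal{PM}^{-p}\cap\mathcal{X}^{2+p}$, the linear bounds from Lemmas \ref{firstSemigroupLemma} and \ref{secondSemigroupLemma} with $m_{1}=-p$, $m_{2}=2+p$, the splitting of $|j||k-j|$ into $|j|^{2}$ and $|k-j|^{2}$ contributions, the $\Omega_{F}\cup\Omega_{I}$ decomposition, and the convergence of $\sum_{k}|k|^{2p-2}$ as the second place $p<1/2$ enters. The only difference is a matter of bookkeeping in the $\mathcal{X}^{2+p}$ bilinear bound: the paper decouples the $k$- and $j$-sums via $|k-j|^{p}/|j|^{p}\leq 2^{p}|k|^{p}$ (see \eqref{elementaryEstimateTwoFactors}) so that the $k$-series stands alone, whereas you verify the equivalent Schur-type uniform bound $\sup_{\ell}|\ell|^{-p}\sum_{j}|\ell+j|^{p-2}|j|^{p}<\infty$.
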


\begin{proof}  To apply
Lemma \ref{fixedpoint}, we need to conclude $x_{0}=S\mathbb{P}\phi_{0}$ is in the space
$\mathcal{PM}^{-p}\cap\mathcal{X}^{2+p}.$  This follows from Lemma \ref{firstSemigroupLemma} and also from
Lemma \ref{secondSemigroupLemma} with $m_{1}=-p$ and $m_{2}=2+p,$ since these parameters satisfy the condition
$m_{2}-m_{1}-4<-n.$

We begin with the
estimate of $B(F,G)$ in $\mathcal{PM}^{-p}.$
Using the definition of $\mathcal{PM}^{-p}$ and the triangle inequality, we have
\begin{multline}\nonumber
\|B(F,G)\|_{\mathcal{PM}^{-p}}
\\
=\sup_{t\in[0,T]}\sup_{k\in\mathbb{Z}_{*}}\left|\int_{0}^{t}\frac{1}{|k|^{p}}
e^{-(t-s)\sigma(\Delta^{2}+\Delta)(k)}\sum_{j\in\mathbb{Z}_{*}}j\hat{F}(s,j)(k-j)\hat{G}(s,k-j)\ ds\right|
\\
\leq \sup_{t\in[0,T]}\sup_{k\in\mathbb{Z}_{*}}\int_{0}^{t}\frac{e^{-(t-s)\sigma(\Delta^{2}+\Delta)(k)}}{|k|^{p}}
\sum_{j\in\mathbb{Z}_{*}}
|j| |\hat{F}(s,j)| |k-j| |\hat{G}(s,k-j)|\ ds.
\end{multline}
We then use Young's inequality on $j(k-j),$ and bound the exponentials by $M_{1}$ (recall the definition of $M_{1}$ in
\eqref{M1}), finding
\begin{multline}\nonumber
\|B(F,G)\|_{\mathcal{PM}^{-p}}
\\
\leq \sup_{t\in[0,T]}\sup_{k\in\mathbb{Z}_{*}}\frac{M_{1}}{2}\int_{0}^{t}
\frac{1}{|k|^{p}}\sum_{j\in\mathbb{Z}_{*}}
|j|^{2} |\hat{F}(s,j)| |\hat{G}(s,k-j)|\ ds
\\
+\sup_{t\in[0,T]}\sup_{k\in\mathbb{Z}_{*}}\frac{M_{1}}{2}\int_{0}^{t}
\frac{1}{|k|^{p}}\sum_{j\in\mathbb{Z}_{*}}
 |\hat{F}(s,j)| |k-j|^{2}|\hat{G}(s,k-j)|\ ds.
\end{multline}
We multiply and divide by the appropriate powers of $|j|$ and $|k-j|:$
\begin{multline}\label{almostDonePM-p}
\|B(F,G)\|_{\mathcal{PM}^{-p}}
\\
\leq \sup_{t\in[0,T]}\sup_{k\in\mathbb{Z}_{*}}\frac{M_{1}}{2}\int_{0}^{t}
\sum_{j\in\mathbb{Z}_{*}}\left(\frac{|k-j|^{p}}{|k|^{p}|j|^{p}}\right)
|j|^{2+p} |\hat{F}(s,j)| \left(\frac{|\hat{G}(s,k-j)|}{|k-j|^{p}}\right)\ ds
\\
+\sup_{t\in[0,T]}\sup_{k\in\mathbb{Z}_{*}}\frac{M_{1}}{2}\int_{0}^{t}
\sum_{j\in\mathbb{Z}_{*}}\left(\frac{|j|^{p}}{|k|^{p}|k-j|^{p}}\right)
 \left(\frac{|\hat{F}(s,j)|}{|j|^{p}}\right) |k-j|^{2+p}|\hat{G}(s,k-j)|\ ds.
\end{multline}
We have the elementary bounds
\begin{equation}\label{elementaryBoundsThreeFactors}
\frac{|k-j|^{p}}{|k|^{p}|j|^{p}}\leq 2^{p}, \qquad \frac{|j|^{p}}{|k|^{p}|k-j|^{p}}\leq 2^{p}.
\end{equation}
For the first term on the right-hand side of \eqref{almostDonePM-p},
we take the supremum with respect to $s$ and $k$ for the factor
$\frac{|\hat{G}(s,k-j)|}{|k-j|^{p}}$ in the integrand.  For the second term on the right-hand side, we take the supremum with respect to $s$ and $j$ for the factor
$\hat{F}(s,j)$ in the integrand.  These considerations lead to the following bound:
\begin{equation}\nonumber
\|B(F,G)\|_{\mathcal{PM}^{-p}}
\\
\leq 2^{p-1}M_{1}
\left(\|G\|_{\mathcal{PM}^{-p}}\|F\|_{\mathcal{X}^{2+p}}
+
\|F\|_{\mathcal{PM}^{-p}}\|G\|_{\mathcal{X}^{2+p}}\right)
\end{equation}
We may further bound this as
\begin{equation}\label{BFGPM-p}
\|B(F,G)\|_{\mathcal{PM}^{-p}}\leq 2^{p-1}M_{1}(\|F\|_{\mathcal{PM}^{-p}}+\|F\|_{\mathcal{X}^{2+p}})
(\|G\|_{\mathcal{PM}^{-p}}+\|G\|_{\mathcal{X}^{2+p}}).
\end{equation}

We next must estimate $B(F,G)$ in the space $\mathcal{X}^{2+p}.$  We begin with the definition, and use the triangle
inequality:
\begin{multline}\nonumber
\|B(F,G)\|_{\mathcal{X}^{2+p}}\leq
\\
\sum_{k\in\mathbb{Z}_{*}}\int_{0}^{T}|k|^{2+p} \int_{0}^{t}e^{-(t-s)\sigma(\Delta^{2}+\Delta)(k)}
\sum_{j\in\mathbb{Z}_{*}}|j| |\hat{F}(s,j)| |k-j| |\hat{G}(s,k-j)|\ ds dt.
\end{multline}
Then, as before, we use Young's inequality on $j(k-j),$
\begin{multline}\nonumber
\|B(F,G)\|_{\mathcal{X}^{2+p}}
\\
\leq\frac{1}{2}\sum_{k\in\mathbb{Z}_{*}}\int_{0}^{T}|k|^{2+p}\int_{0}^{t}e^{-(t-s)\sigma(\Delta^{2}+\Delta)(k)}
\sum_{j\in\mathbb{Z}_{*}}
|j|^{2}|\hat{F}(s,j)| |\hat{G}(s,k-j)|\ ds dt
\\
+\sum_{k\in\mathbb{Z}_{*}}\frac{1}{2}\int_{0}^{T}|k|^{2+p}\int_{0}^{t}e^{-(t-s)\sigma(\Delta^{2}+\Delta)(k)}
\sum_{j\in\mathbb{Z}_{*}}
|k-j|^{2}|\hat{F}(s,j)| |\hat{G}(s,k-j)|\ ds dt.
\end{multline}
For the first term on the right-hand side, we find the $\mathcal{PM}^{-p}$-norm of $G$ by multiplying and dividing by
$|k-j|^{p}$ and taking a supremum, and we also multiply and divide by $|j|^{p}.$
We treat the second term on the right-hand side similarly, and we arrive at
\begin{multline}\label{almostX2+p}
\|B(F,G)\|_{\mathcal{X}^{2+p}}
\\
\leq
\frac{\|G\|_{\mathcal{PM}^{-p}}}{2}\sum_{k\in\mathbb{Z}_{*}}\int_{0}^{T}\int_{0}^{t}
|k|^{2+p}e^{-(t-s)\sigma(\Delta^{2}+\Delta)(k)}\sum_{j\in\mathbb{Z}_{*}}\frac{|k-j|^{p}|j|^{2+p}}{|j|^{p}} |F(s,j)|\ ds dt
\\
+\frac{\|F\|_{\mathcal{PM}^{-p}}}{2}\sum_{k\in\mathbb{Z}_{*}}\int_{0}^{T}\int_{0}^{t}
|k|^{2+p}e^{-(t-s)\sigma(\Delta^{2}+\Delta)(k)}\sum_{j\in\mathbb{Z}_{*}}\frac{|k-j|^{2+p}|j|^{p}}{|k-j|^{p}} |G(s,k-j)|\ ds dt.
\end{multline}
We may then make the elementary estimates
\begin{equation}\label{elementaryEstimateTwoFactors}
\frac{|k-j|^{p}}{|j|^{p}}\leq \left(\frac{|k|+|j|}{|j|}\right)^{p}\leq 2^{p}|k|^{p},
\qquad
\frac{|j|^{p}}{|k-j|^{p}}
\leq \left(\frac{|k-j|+|k|}{|k-j|}\right)^{p}\leq 2^{p}|k|^{p}.
\end{equation}
Using these estimates with \eqref{almostX2+p}, we have
\begin{multline}\nonumber
\|B(F,G)\|_{\mathcal{X}^{2+p}}
\\
\leq
2^{p-1}\|G\|_{\mathcal{PM}^{-p}}\sum_{k\in\mathbb{Z}_{*}}\int_{0}^{T}\int_{0}^{t}
|k|^{2+2p}e^{-(t-s)\sigma(\Delta^{2}+\Delta)(k)}\sum_{j\in\mathbb{Z}_{*}}|j|^{2+p} |F(s,j)|\ ds dt
\\
+2^{p-1}\|F\|_{\mathcal{PM}^{-p}}\sum_{k\in\mathbb{Z}_{*}}\int_{0}^{T}\int_{0}^{t}
|k|^{2+2p}e^{-(t-s)\sigma(\Delta^{2}+\Delta)(k)}\sum_{j\in\mathbb{Z}_{*}}|k-j|^{2+p} |G(s,k-j)|\ ds dt.
\end{multline}
In the second term on the right-hand side we change the variable in the final summation, and
we also change the order of integration in both terms on the right-hand side, finding
\begin{multline}\label{almostDone1D}
\|B(F,G)\|_{\mathcal{X}^{2+p}}
\\
\leq 2^{p-1}\|G\|_{\mathcal{PM}^{-p}}\int_{0}^{T}
\left(\sum_{j\in\mathbb{Z}_{*}}|j|^{2+p}|\hat{F}(s,j)|\right)
\left(\sum_{k\in\mathbb{Z}_{*}}|k|^{2+2p}\int_{s}^{T}e^{-(t-s)\sigma(\Delta^{2}+\Delta)(k)}\ dt\right)\ ds.
\\
+2^{p-1}\|F\|_{\mathcal{PM}^{-p}}\int_{0}^{T}
\left(\sum_{j\in\mathbb{Z}_{*}}|j|^{2+p}|\hat{G}(s,j)|\right)
\left(\sum_{k\in\mathbb{Z}_{*}}|k|^{2+2p}\int_{s}^{T}e^{-(t-s)\sigma(\Delta^{2}+\Delta)(k)}\ dt\right)\ ds.
\end{multline}

We will work now with the sum with respect to $k,$ which is the same in both of the terms on the right-hand side.
We split it into the sum over $\Omega_{F}$ and the sum over $\Omega_{I}.$
Considering $k\in\Omega_{F},$ we have
\begin{equation}\nonumber
\sum_{k\in\Omega_{F}}k^{2+2p}\int_{s}^{T}e^{-(t-s)\sigma(\Delta^{2}+\Delta)(k)}\ dt\leq |\Omega_{F}|M_{1}M_{3}^{2+2p}T.
\end{equation}
(We have said that if $\Omega_{F}=\emptyset$ then we may take $T=\infty,$ and then this product is to be understood
as $|\Omega_{F}|T=0.$)
Considering $k\in\Omega_{I},$ we evaluate the integral, finding
\begin{equation}\nonumber
\sum_{k\in\Omega_{I}}k^{2+2p}\int_{s}^{T}e^{-(t-s)\sigma(\Delta^{2}+\Delta)(k)}\ dt
=\sum_{k\in\Omega_{I}}k^{2+2p}\left(\frac{1-e^{-(T-s)\sigma(\Delta^{2}+\Delta)(k)}}{\sigma(\Delta^{2}+\Delta)(k)}\right).
\end{equation}
Since the denominator is positive for $k\in\Omega_{I},$ we may neglect the exponential in the numerator.  Then we use the definition
of $M_{2},$ finding
\begin{equation}\nonumber
\sum_{k\in\Omega_{I}}k^{2+2p}\int_{s}^{T}e^{-(t-s)\sigma(\Delta^{2}+\Delta)(k)}\ dt \leq \sum_{k\in\mathbb{Z}_{*}}
\frac{M_{2}}{k^{2-2p}}=M_{2}c(p)<\infty.
\end{equation}
We of course have used here that $p<1/2.$

Returning to \eqref{almostDone1D}, we conclude with the bound
\begin{multline}\label{BFGX2andp1}
\|B(F,G)\|_{\mathcal{X}^{2+p}}\leq
2^{p-1}\left(|\Omega_{F}|M_{1}M_{3}^{2+2p}T+M_{2}c(p)\right)\cdot
\\
\cdot
(\|F\|_{\mathcal{PM}^{-p}}+\|F\|_{\mathcal{X}^{2+p}})
(\|G\|_{\mathcal{PM}^{-p}}+\|G\|_{\mathcal{X}^{2+p}}).
\end{multline}
\end{proof}

\subsection{Existence of solutions with pseudomeasure data with $n=2$}\label{2DSection}

We again let $p\in(0,1/2)$ be given.
In the case of two space dimensions, we will be taking data in $PM^{1-p},$ and finding solutions in
$\mathcal{PM}^{1-p}\cap\mathcal{X}^{2+p}.$  As regards Remark \ref{mRemark}, this means that we have $m_{1}=1-p$
and $m_{2}=2+p,$ so that $m_{2}-m_{1}=1+2p<2,$ as required.

\begin{theorem}\label{n=2PseudomeasureTheorem}
Let $p\in(0,1/2)$  and  $T>0$ be given.
(If the conditions of Case A hold, then $T$ may be
taken to be $T=\infty.$)
Let $n=2.$  There exists $\varepsilon>0$ such that for any $\phi_{0}$ with $\mathbb{P}\phi_{0}\in PM^{1-p},$ if
$\|\mathbb{P}\phi_{0}\|_{PM^{1-p}}<\varepsilon,$ then there exists $\phi$ with
$\mathbb{P}\phi\in\mathcal{PM}^{1-p}\cap \mathcal{X}^{2+p}$ such that $\phi$ is a mild solution
to the initial value problem \eqref{KSEquationOriginal}, \eqref{data}.
\end{theorem}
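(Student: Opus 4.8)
The plan is to apply the abstract fixed point result, Lemma~\ref{fixedpoint}, in the Banach space $X=\mathcal{PM}^{1-p}\cap\mathcal{X}^{2+p}$ with norm $\vertiii{f}_{X}=\|f\|_{\mathcal{PM}^{1-p}}+\|f\|_{\mathcal{X}^{2+p}}$, writing the equation in the mild form \eqref{mildKS} with $x_{0}=S\mathbb{P}\phi_{0}$ and $\mathcal{B}(F,G)=-\tfrac{1}{2}B(F,G)$. The membership $x_{0}\in X$ is immediate: the $\mathcal{PM}^{1-p}$ bound follows from Lemma~\ref{firstSemigroupLemma} with $m=1-p$, and the $\mathcal{X}^{2+p}$ bound from Lemma~\ref{secondSemigroupLemma} with $m_{1}=1-p$, $m_{2}=2+p$, since $m_{2}-m_{1}-4=2p-3<-2=-n$ precisely because $p<1/2$. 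Thus the entire content of the theorem is the pair of bilinear estimates $\|B(F,G)\|_{\mathcal{PM}^{1-p}}\leq C\vertiii{F}_{X}\vertiii{G}_{X}$ and $\|B(F,G)\|_{\mathcal{X}^{2+p}}\leq C\vertiii{F}_{X}\vertiii{G}_{X}$; given these, smallness of $\|\mathbb{P}\phi_{0}\|_{PM^{1-p}}$ (hence of $\vertiii{x_{0}}_{X}$) lets Lemma~\ref{fixedpoint} produce the solution.

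In both estimates I would start from the Fourier representation \eqref{bilinearTransform}, use $|\sum_{i}(k_{i}-j_{i})j_{i}|\le|k-j|\,|j|$, and---this is the feature absent in the one-dimensional argument---decompose the convolution sum according to which input carries the higher frequency, writing the index set for fixed $k$ as $\{|j|\le|k-j|\}\cup\{|j|>|k-j|\}$. On the first region $|k-j|\ge|k|/2$, and I would assign the $\mathcal{X}^{2+p}$ norm to the factor at frequency $k-j$ (the high one) and the $\mathcal{PM}^{1-p}$ norm to the factor at frequency $j$; on the second region the roles are reversed, so the two pieces combine symmetrically. For the $\mathcal{PM}^{1-p}$ estimate this is already enough: after bounding the exponential by $M_{1}$ (using \eqref{M1}) and pulling the $\mathcal{PM}^{1-p}$-controlled factor out in the time variable, the prefactor multiplying the $\mathcal{X}^{2+p}$ summand on the first region is
\[
\frac{|k|^{1-p}|j|^{p}}{|k-j|^{1+p}}\le\frac{|k|^{1-p}}{|k-j|}\le 2|k|^{-p}\le 2,
\]
using $|j|\le|k-j|$ and $|k-j|\ge|k|/2$; the remaining sum is exactly $\|F\|_{\mathcal{X}^{2+p}}$ and the bound is uniform in $k$, so the $\mathcal{PM}^{1-p}$ estimate closes with no further input.

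The $\mathcal{X}^{2+p}$ estimate is where the real work lies, and I expect it to be the main obstacle. Here I cannot discard the semigroup: after exchanging the order of the $s$ and $t$ integrations I would use $\int_{s}^{T}e^{-(t-s)\sigma(\Delta^{2}+\Delta)(k)}\,dt\le 1/\sigma(\Delta^{2}+\Delta)(k)\le C|k|^{-4}$ on $\Omega_{I}$ (from \eqref{M2}), handling the finite set $\Omega_{F}$ separately by \eqref{M3} and the finiteness of $T$, exactly as in the proof of Theorem~\ref{existY-1}. What remains on $\Omega_{I}$ is a genuine double sum over $(k,j)$ in which the gain $|k|^{2+p}/\sigma(\Delta^{2}+\Delta)(k)\le C|k|^{p-2}$ must be balanced against the frequency weights. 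After the high--low split and the substitution $m=k-j$, the factor to control is, for fixed $F$-frequency $m$, an inner sum of the form $\sum_{k}|k|^{p-2}|k-j|^{-(1-p)}$ (up to the harmless $\Omega_{F}$ contribution); the decaying factor $|k-j|^{-(1-p)}$ here is produced precisely by the extra derivative in the data space $PM^{1-p}$ (compare the growing factor $|k-j|^{p}$ that the space $PM^{-p}$ would give, cf. Remark~\ref{mRemark}). The crux is then the two-dimensional lattice estimates $\sum_{1\le|k|\le R}|k|^{p-2}\le CR^{p}$ and $\sum_{|k|\ge R}|k|^{2p-3}<\infty$, both of which hold exactly because $p<1/2$ in dimension $n=2$; these yield an inner factor bounded by $C|m|^{p-1}\le C$, leaving the bound $\|B(F,G)\|_{\mathcal{X}^{2+p}}\le C(\|F\|_{\mathcal{PM}^{1-p}}\|G\|_{\mathcal{X}^{2+p}}+\|F\|_{\mathcal{X}^{2+p}}\|G\|_{\mathcal{PM}^{1-p}})$.

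Combining the two bilinear estimates, absorbing both into $C\vertiii{F}_{X}\vertiii{G}_{X}$, and invoking Lemma~\ref{fixedpoint} completes the proof. The only place the dimension and the restriction $p<1/2$ are genuinely used is the convergence of the two-dimensional series in the $\mathcal{X}^{2+p}$ bound, which is the heart of why two space dimensions force data one derivative more regular than in one dimension.
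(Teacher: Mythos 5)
Your proposal is correct, and its skeleton (Lemma \ref{fixedpoint} in $\mathcal{PM}^{1-p}\cap\mathcal{X}^{2+p}$, the semigroup membership via Lemmas \ref{firstSemigroupLemma} and \ref{secondSemigroupLemma} with $m_{1}=1-p$, $m_{2}=2+p$, the $\Omega_{F}/\Omega_{I}$ decomposition, and the $\mathcal{X}^{2+p}$ estimate closing via a two-dimensional lattice sum that converges exactly when $p<1/2$) matches the paper. Where you diverge is in the mechanics of the two bilinear estimates. You decompose the convolution index set into high--low regions $\{|j|\le|k-j|\}$ and $\{|j|>|k-j|\}$, keep the coupling between $k$ and $j$, and close with truncated lattice estimates such as $\sum_{1\le|k|\le R}|k|^{p-2}\le CR^{p}$ (your displayed inner sum has a small slip --- the decaying factor should be $|j|^{-(1-p)}=|k-m|^{-(1-p)}$, not $|k-j|^{-(1-p)}$, which is constant once $m=k-j$ is fixed --- but the estimates you invoke are the right ones and the computation closes). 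The paper instead splits the weight algebraically, writing $|k|\le|k-j|+|j|$ for one factor of $|k|$, and then uses the pointwise inequalities \eqref{elementaryBoundsThreeFactors} and \eqref{elementaryEstimateTwoFactors}, valid for \emph{all} $j,k$, to completely decouple the $j$-sum (which becomes $\sum_{j}|j|^{2+p}|\hat F(s,j)|$, i.e.\ the $\mathcal{X}^{2+p}$ integrand) from the $k$-sum, which after $\int_{s}^{T}e^{-(t-s)\sigma(\Delta^{2}+\Delta)(k)}\,dt\le 1/(M_{2}|k|^{4})$ on $\Omega_{I}$ reduces to the single absolutely convergent series $\sum_{k}|k|^{2p-3}$. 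Your route is the more standard paraproduct-style argument and makes transparent which factor absorbs the high frequency; the paper's route avoids any restriction on the summation region and any convolution estimate, at the cost of the slightly less intuitive multiply-and-divide manipulations. Both give the same bilinear bounds up to constants, so the fixed point argument goes through identically. (One small imprecision: $\sum_{1\le|k|\le R}|k|^{p-2}\le CR^{p}$ holds for every $p>0$; only the tail estimate, and the convergence of the paper's series $\sum_{k}|k|^{2p-3}$, genuinely require $p<1/2$.)
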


\begin{proof}  To apply
Lemma \ref{fixedpoint}, we need to conclude $x_{0}=S\mathbb{P}\phi_{0}$ is in the space
$\mathcal{PM}^{1-p}\cap\mathcal{X}^{2+p}.$  This follows from Lemma \ref{firstSemigroupLemma} and also from
Lemma \ref{secondSemigroupLemma} with $m_{1}=1-p$ and $m_{2}=2+p,$ since these parameters satisfy the condition
$m_{2}-m_{1}-4<-n.$

We estimate $\|B(F,G)\|_{\mathcal{PM}^{1-p}}.$  From the definition of the norm and $I^{+},$ we have
\begin{multline}\nonumber
\|B(F,G)\|_{\mathcal{PM}^{1-p}} =
\\
\sup_{t\in[0,T]}\sup_{k\in\mathbb{Z}^{2}_{*}}|k|^{1-p}\left|
\int_{0}^{t}e^{-(t-s)\sigma(\Delta^{2}+\Delta)(k)}\sum_{j\in\mathbb{Z}^{2}_{*}}\sum_{i=1}^{2}
(k_{i}-j_{i})\hat{F}(s,k-j)j_{i}\hat{G}(s,j)\ ds\right|.
\end{multline}
We use the triangle inequality, and the definition of the constant $M_{1},$ to find
\begin{equation}\nonumber
\|B(F,G)\|_{\mathcal{PM}^{1-p}}
\leq
2M_{1}\sup_{k\in\mathbb{Z}^{2}_{*}}|k|^{1-p}\int_{0}^{T}\sum_{j\in\mathbb{Z}^{2}_{*}}
|k-j| |\hat{F}(s,k-j)| |j| |\hat{G}(s,j)|\ ds.
\end{equation}
Bounding $|k|$ as $|k|\leq |k-j|+|j|,$ this becomes
\begin{multline}\nonumber
\|B(F,G)\|_{\mathcal{PM}^{1-p}}
\leq 2M_{1}\sup_{k\in\mathbb{Z}^{2}_{*}}\int_{0}^{T}\sum_{j\in\mathbb{Z}^{2}_{*}}
\frac{|k-j|^{2}|j|}{|k|^{p}} |\hat{F}(s,k-j)|  |\hat{G}(s,j)|\ ds
\\
+2M_{1}\sup_{k\in\mathbb{Z}^{2}_{*}}\int_{0}^{T}\sum_{j\in\mathbb{Z}^{2}_{*}}
\frac{ |k-j| |j|^{2}}{|k|^{p}} |\hat{F}(s,k-j)|  |\hat{G}(s,j)|\ ds.
\end{multline}
We adjust the factors of $|k-j|$ and $|j|,$ to find
\begin{multline}\nonumber
\|B(F,G)\|_{\mathcal{PM}^{1-p}}
\\
\leq 2M_{1}\sup_{k\in\mathbb{Z}^{2}_{*}}\int_{0}^{T}\sum_{j\in\mathbb{Z}^{2}_{*}}
\left(\frac{|j|^{p}}{|k|^{p}|k-j|^{p}}\right) |k-j|^{2+p}|\hat{F}(s,k-j)|  |j|^{1-p}|\hat{G}(s,j)|\ ds
\\
+2M_{1}\sup_{k\in\mathbb{Z}^{2}_{*}}\int_{0}^{T}\sum_{j\in\mathbb{Z}^{2}_{*}}
\left(\frac{ |k-j|^{p}}{|k|^{p}|j|^{p}}\right) |k-j|^{1-p}|\hat{F}(s,k-j)| |j|^{2+p} |\hat{G}(s,j)|\ ds.
\end{multline}
We again use \eqref{elementaryBoundsThreeFactors}, which is valid regardless of dimension.
In the first term on the right-hand side we take the supremum of $|j|^{1-p} |\hat{G}(s,j)|$ with respect to both $s$ and $j,$
and we treat the second term on the right-hand side similarly.  It is then immediate that
\begin{equation}\label{BFGPM1lessp}
\|B(F,G)\|_{\mathcal{PM}^{1-p}}\leq 2^{p+1}M_{1} (\|F\|_{\mathcal{PM}^{1-p}}+\|F\|_{\mathcal{X}^{2+p}})
(\|G\|_{\mathcal{PM}^{1-p}}+ \|G\|_{\mathcal{X}^{2+p}}).
\end{equation}

Next we bound $B(F,G)$ in $\mathcal{X}^{2+p}.$  From the definition of the norm and of $B(F,G),$ we have
\begin{multline}\nonumber
\|B(F,G)\|_{\mathcal{X}^{2+p}} =
\\
\int_{0}^{T}\sum_{k\in\mathbb{Z}^{2}_{*}}|k|^{2+p}\left|\int_{0}^{t}
e^{-(t-s)\sigma(\Delta^{2}+\Delta)(k)}\sum_{j\in\mathbb{Z}^{2}_{*}}
((k-j)\cdot j)\hat{F}(s,k-j)\hat{G}(s,j)\ ds\right|\ dt.
\\
\leq
2\int_{0}^{T}\sum_{k\in\mathbb{Z}^{2}_{*}}|k|^{2+p}\int_{0}^{t}e^{-(t-s)\sigma(\Delta^{2}+\Delta)(k)}
\sum_{j\in\mathbb{Z}^{2}_{*}}|k-j| |\hat{F}(s,k-j)| |j| |\hat{G}(s,j)| \ ds dt.
\end{multline}
We then write $|k|\leq |k-j|+|j|$ for just one factor of $|k|$ on the right-hand side:
\begin{multline}\nonumber
\|B(F,G)\|_{\mathcal{X}^{2+p}}
\\
\leq
2\int_{0}^{T}\sum_{k\in\mathbb{Z}^{2}_{*}}|k|^{1+p}
\int_{0}^{t}e^{-(t-s)\sigma(\Delta^{2}+\Delta)(k)}\sum_{j\in\mathbb{Z}^{2}_{*}}
|k-j|^{2} |\hat{F}(s,k-j)| |j| |\hat{G}(s,j)|
\ ds dt
\\
+
2\int_{0}^{T}\sum_{k\in\mathbb{Z}^{2}_{*}}|k|^{1+p}
\int_{0}^{t}e^{-(t-s)\sigma(\Delta^{2}+\Delta)(k)}\sum_{j\in\mathbb{Z}^{2}_{*}}
|k-j| |\hat{F}(s,k-j)| |j|^{2} |\hat{G}(s,j)|
\ ds dt.
\end{multline}
We next adjust factors of $|k-j|$ and $|j|,$ finding
\begin{multline}\nonumber
\|B(F,G)\|_{\mathcal{X}^{2+p}}
\\
\leq
2\int_{0}^{T}\sum_{k\in\mathbb{Z}^{2}_{*}}|k|^{1+p}
\int_{0}^{t}\Bigg[e^{-(t-s)\sigma(\Delta^{2}+\Delta)(k)}\cdot
\\
\cdot\sum_{j\in\mathbb{Z}^{2}_{*}}
\left(\frac{|j|^{p}}{|k-j|^{p}}\right)|k-j|^{2+p} |\hat{F}(s,k-j)| |j|^{1-p} |\hat{G}(s,j)|
\Bigg]\ ds dt
\\
+
2\int_{0}^{T}\sum_{k\in\mathbb{Z}^{2}_{*}}|k|^{1+p}
\int_{0}^{t}\Bigg[e^{-(t-s)\sigma(\Delta^{2}+\Delta)(k)}\cdot
\\
\cdot\sum_{j\in\mathbb{Z}^{2}_{*}}
\left(\frac{|k-j|^{p}}{|j|^{p}}\right)
|k-j|^{1-p} |\hat{F}(s,k-j)| |j|^{2+p} |\hat{G}(s,j)|
\Bigg]\ ds dt.
\end{multline}
We again use \eqref{elementaryEstimateTwoFactors}, which is valid regardless of dimension.
For the first term on the right-hand side, we bring out $\|G\|_{\mathcal{PM}^{1-p}}$ and change variables in the sum with respect to $j.$
In the second term on the right-hand side, we bring out $\|F\|_{\mathcal{PM}^{1-p}}.$  We also change the order of integration in both
of these terms, arriving at the bound
\begin{multline}\nonumber
\|B(F,G)\|_{\mathcal{X}^{2+p}}
\\
\leq
2^{p+1}\|G\|_{\mathcal{PM}^{1-p}}\int_{0}^{T}\left(\sum_{j\in\mathbb{Z}^{2}_{*}}|j|^{2+p}|F(s,j)|\right)
\left(\sum_{k\in\mathbb{Z}^{2}_{*}}\int_{s}^{T}|k|^{1+2p}e^{-(t-s)\sigma(\Delta^{2}+\Delta)(k)}\ dt\right)\ ds
\\
+
2^{p+1}\|F\|_{\mathcal{PM}^{1-p}}\int_{0}^{T}\left(\sum_{j\in\mathbb{Z}^{2}_{*}}|j|^{2+p}|G(s,j)|\right)
\left(\sum_{k\in\mathbb{Z}^{2}_{*}}\int_{s}^{T}|k|^{1+2p}e^{-(t-s)\sigma(\Delta^{2}+\Delta)(k)}\ dt\right)\ ds.
\end{multline}
We again work with the second factor of the integrand (which is the same in both of the terms on the right-hand side).  We decompose the sum over $k$ using
$\mathbb{Z}^{2}_{*}=\Omega_{F}\cup\Omega_{I}.$  Considering $k\in\Omega_{F},$ we have
\begin{equation}\nonumber
\sum_{k\in\Omega_{F}}\int_{s}^{T}|k|^{1+2p}e^{-(t-s)\sigma(\Delta^{2}+\Delta)(k)}\ dt
\leq |\Omega_{F}|M_{1}M_{3}^{1+2p}T.
\end{equation}
Considering next $k\in\Omega_{I},$ we evaluate the integral and find
\begin{equation}\nonumber
\sum_{k\in\Omega_{I}}\int_{s}^{T}|k|^{1+2p}e^{-(t-s)\sigma(\Delta^{2}+\Delta)(k)}\ dt
=\sum_{k\in\Omega_{I}}|k|^{1+2p}\frac{1-e^{-(T-s)\sigma(\Delta^{2}+\Delta(k)}}{\sigma(\Delta^{2}+\Delta)(k)}.
\end{equation}
Since the denominator on the right-hand side is positive, we may neglect the exponential in the numerator on the right-hand side,
and we may use the definition of $M_{2}$ to find
\begin{equation}\nonumber
\sum_{k\in\Omega_{I}}\int_{s}^{T}|k|^{1+2p}
e^{-(t-s)\sigma(\Delta^{2}+\Delta)(k)}\ dt \leq \frac{1}{M_{2}}\sum_{k\in\mathbb{Z}^{2}_{*}}
\frac{1}{|k|^{3-2p}}.
\end{equation}
Since $p<1/2,$ this sum converges.  We then conclude that
\begin{multline}\label{BFGX2andp2}
\|B(F,G)\|_{\mathcal{X}^{2+p}}\leq
2^{p+1}\left(|\Omega_{F}|M_{1}M_{3}^{1+2p}T+\frac{1}{M_{2}}\sum_{k\in\mathbb{Z}^{2}_{*}}\frac{1}{|k|^{3-2p}}\right)\cdot
\\
\cdot(\|F\|_{\mathcal{PM}^{1-p}}+\|F\|_{\mathcal{X}^{2+p}})
(\|G\|_{\mathcal{PM}^{1-p}}+\|G\|_{\mathcal{X}^{2+p}}).
\end{multline}
\end{proof}

\section{Analyticity}\label{analyticitySection}

In this section we will show that the solutions produced above are analytic within their time of existence, if needed by further restricting the size of the initial data.

Given initial data $\psi_0$ we recall the mild formulation of the Kuramoto-Sivashinsky equation \eqref{mildKS}:
\[\psi = S\psi_0 - \frac{1}{2} B(\psi,\psi),\]
where the semigroup $S$ was introduced in \eqref{semigroup} and the bilinear term $B=B(F,G)$ was given in \eqref{bilinop}.

Our approach to establish analyticity follows the one used by H. Bae in \cite{baePAMS}, in which one revisits the existence proofs but for an exponentially-weighted modification of $\psi$. More precisely, let $g=g(t)$ be a given function and consider
\begin{equation}\label{Vdef}
  V\equiv e^{g(t)|D|}\psi,
\end{equation}
where $|D|= \sqrt{-\Delta}$. Then $V$ should satisfy the equation
\begin{equation} \label{Videntweightg}
V = e^{[g(t)|D|-t(\Delta^2 + \Delta)]}V_0 - \frac{1}{2} \int_0^t e^{[g(t)|D|-(t-s)(\Delta^2 + \Delta)]}[\mathbb{P}(|\nabla e^{-g(s)|D|}V|^2)] \, ds,
\end{equation}
with $V_0 = \psi_0$. Existence of a solution to this equation for suitable $g$ and sufficiently small $V_0$ in certain function spaces then implies analyticity of $\psi$, as will be made precise at the end of this section. The radius of analyticity is bounded from below by $g(t)$.

We re-write \eqref{Videntweightg}, separating the linear from the nonlinear term, as
\begin{equation}\nonumber
  V = \mathcal{L}V_0 -\frac{1}{2}\mathcal{B}(V,V),
\end{equation}
with
\begin{equation}\label{Vlin}
  \calL V_0 = e^{[g(t)|D|-t(\Delta^2 + \Delta)]}V_0
\end{equation}
and
\begin{equation}\nonumber
\calB(U,W)= \int_0^t e^{[g(t)|D|-(t-s)(\Delta^2 + \Delta)]}[\mathbb{P}(\nabla e^{-g(s)|D|}U\cdot \nabla e^{-g(s)|D|}W)] \, ds.
\end{equation}

We will prove existence of a solution $V$  to \eqref{Videntweightg} for initial data $V_0 = \psi_0$ in $Y^{-1}$, in any dimension, and in $PM^{(n-1)-p}$, in dimensions $n=1$ and $n=2$, with $0<p<1/2$. As before, we use Lemma \ref{fixedpoint}, so we require the following bounds:
\begin{align}
\label{LY-1}   & \|\calL V_0\|_{\calY^{-1}}\leq C \|V_0\|_{Y^{-1}}\\
\label{LX3}    & \|\calL V_0\|_{\calX^{3}}\leq C \|V_0\|_{Y^{-1}}\\
\label{LPM}    & \|\calL V_0\|_{\calPM^{(n-1)-p}}\leq C \|V_0\|_{PM^{(n-1)-p}}\\
\label{LX2andp}& \|\calL V_0\|_{\calX^{2+p}}\leq C \|V_0\|_{PM^{(n-1)-p}}
\end{align}
as well as
\begin{align}
\label{BY-1}   & \|\calB(U,W)\|_{\calY^{-1}}\leq C (\|U\|_{\calY^{-1}} + \|U\|_{\calX^3})(\|W\|_{\calY^{-1}} + \|W\|_{\calX^3})\\
\label{BX3}    & \|\calB(U,W)\|_{\calX^{3}}\leq C (\|U\|_{\calY^{-1}} + \|U\|_{\calX^3})(\|W\|_{\calY^{-1}} + \|W\|_{\calX^3})\\
\label{BPM}    & \|\calB(U,W)\|_{\calPM^{(n-1)-p}}\leq C (\|U\|_{\calPM^{(n-1)-p}} + \|U\|_{\calX^{2+p}})(\|W\|_{\calPM^{(n-1)-p}} + \|W\|_{\calX^{2+p}})\\
\label{BX2andp}& \|\calB(U,W)\|_{\calX^{2+p}}\leq C (\|U\|_{\calPM^{(n-1)-p}} + \|U\|_{\calX^{2+p}})(\|W\|_{\calPM^{(n-1)-p}} + \|W\|_{\calX^{2+p}}).
\end{align}

The Fourier coefficients of $(\calL V_0)(t,\cdot)$ are given by:
\begin{equation}\label{FouriercalL}
  \mathcal{F}(\calL V_0)(t,k) = e^{g(t)|k| -t \sigma(\Delta^2+\Delta)(k)}\hat{V_0}(k).
\end{equation}
The Fourier coefficients of the nonlinear term are
\begin{align}\label{FouriercalB}
 \nonumber \mathcal{F}&(\calB (U,W))(t,k) = -\int_0^t e^{g(t)|k| -(t-s) \sigma(\Delta^2+\Delta)(k)}\Big[ \\
  & \sum_{j \in \mathbb{Z}_\ast^n, j \neq k}  ((k - j) \cdot j) \, e^{-g(s)|k-j|}\hat{U}(s,k-j)  e^{-g(s) |j|} \hat{W}(s,j)\Big] \, ds.
\end{align}

In what follows we will consider two kinds of temporal weights:
\begin{align}
\label{fourthroot} & g(t) = a \sqrt[4]{t}, \text{ for some constant } a>0;\\
\label{lintime} & g(t) = bt, \text{ for some constant } b>0.
\end{align}

In order to estimate the linear term $\calL$ we will make use of the following technical lemma.

\begin{lemma} \label{goftLinEst}
  Let $k \in \Omega_I$. Then, if $M_2$ is as in \eqref{M2}, it holds that:
    \begin{enumerate}
        \item if $g(t) = bt$, with $b < \frac{M_2}{2}$, then
         \[g(t) |k| - t\sigma(\Delta^2+\Delta)(k) \leq - \frac{M_2 t}{2}|k|^4;\]
        \item if $g(t) = a\sqrt[4]{t}$ then there exists $C=C(a)>0$ such that
        \[g(t) |k| - t\sigma(\Delta^2+\Delta)(k) \leq C - \frac{M_2 t}{2}|k|^4,\]
        for all $t \geq 0$.
      \end{enumerate}
\end{lemma}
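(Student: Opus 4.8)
The plan is to reduce both statements to the single scalar bound coming from \eqref{M2}: for $k\in\Omega_I$ we have $\sigma(\Delta^2+\Delta)(k)>M_2|k|^4$, so that $-t\sigma(\Delta^2+\Delta)(k)\le -M_2 t|k|^4$ for every $t\ge0$. With this in hand, in each case it suffices to control an ``excess'' term, since I will write
\[
g(t)|k|-t\sigma(\Delta^2+\Delta)(k)\le -\frac{M_2 t}{2}|k|^4+\left(g(t)|k|-\frac{M_2 t}{2}|k|^4\right),
\]
and both conclusions amount to bounding the parenthetical quantity (by $0$ in part (1), by $C$ in part (2)).

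For part (1), I substitute $g(t)=bt$, so the excess term is $bt|k|-\tfrac{M_2 t}{2}|k|^4$. At $t=0$ this vanishes; for $t>0$ it is nonpositive precisely when $b\le \tfrac{M_2}{2}|k|^3$. Since $k\in\Omega_I\subset\mathbb{Z}^{n}_{*}$ forces $|k|\ge 1$, and since $b<M_2/2$ by hypothesis, we have $b<\tfrac{M_2}{2}\le\tfrac{M_2}{2}|k|^3$, which gives the claim with no additive constant.

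For part (2), the key observation is that the quartic-root weight is exactly matched to the fourth-order parabolic scaling: writing $g(t)=a\sqrt[4]{t}$ and setting $u=t|k|^4\ge 0$, the excess term becomes $a t^{1/4}|k|-\tfrac{M_2}{2}t|k|^4 = a u^{1/4}-\tfrac{M_2}{2}u$, a function of $u$ alone that no longer depends on the frequency $k$. This scalar function is continuous on $[0,\infty)$, vanishes at $u=0$, and tends to $-\infty$ as $u\to\infty$, hence attains a finite maximum; a one-line optimization (critical point at $u=(a/(2M_2))^{4/3}$) yields the explicit value $C(a)=\tfrac{3}{4}a^{4/3}(2M_2)^{-1/3}$. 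Taking this $C$ gives the stated bound for all $t\ge 0$ and all $k\in\Omega_I$.

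There is no substantial obstacle; Lemma \ref{goftLinEst} is an elementary calculus estimate. The only point requiring care is recognizing, in part (2), that the substitution $u=t|k|^4$ decouples the bound from $k$, turning a two-parameter estimate into a single-variable maximization whose value depends only on $a$ and $M_2$. This is precisely why the $\sqrt[4]{t}$ weight (rather than, say, a linear-in-$t$ weight) produces a $k$-independent additive constant, whereas the linear weight of part (1) must instead be restricted by $b<M_2/2$ to force the excess to be genuinely nonpositive.
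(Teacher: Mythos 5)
Your proof is correct and follows essentially the same route as the paper: both reduce to $\sigma(\Delta^2+\Delta)(k)\geq M_2|k|^4$, dispose of part (1) by noting $|k|\geq1$ and $b<M_2/2$, and handle part (2) by observing that the excess is a single-variable function bounded above (your substitution $u=t|k|^4$ is just the fourth power of the paper's $z=\sqrt[4]{t}\,|k|$ in $f(z)=az-\tfrac{M_2}{2}z^4$). The only difference is that you compute the optimal constant $C(a)$ explicitly, which the paper does not need.
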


\begin{proof}
  Recall the definition of $M_2$, from \eqref{M2}, such that \[\sigma(\Delta^2+\Delta)(k) \geq M_2 |k|^4.\]
Let us first consider the case $g(t) = bt$ with $b < \frac{M_2}{2}$. Then, clearly, if $k$ is such that
$|k| \geq 1$, it follows that $bt|k| - \frac{M_2}{2} t |k|^4 \leq 0$. It then follows easily that,
if $|k| \geq 1$,
\[ g(t) |k| - t\sigma(\Delta^2+\Delta)(k) \leq g(t) |k| - t M_2 |k|^4 \leq - \frac{M_2 t}{2}|k|^4, \]
as desired. This establishes item (1).

  Next consider the case $g(t) = a\sqrt[4]{t}$. Then of course we have
\begin{equation}\label{proofitem2}
  g(t) |k| - t\sigma(\Delta^2+\Delta)(k) \leq a\sqrt[4]{t}|k| - M_2 t |k|^4.
\end{equation}
Consider the function $f=f(z)=az - \frac{M_2}{2}z^4$. This function is globally bounded from above. Let
\[C=C(a)=\max\{\sup f(z), 1\}.\]
Noting that $a\sqrt[4]{t}|k| - \frac{M_2}{2} t |k|^4 = f(\sqrt[4]{t}|k|)$ we obtain item (2).

\end{proof}

In view of Lemma \ref{goftLinEst} all the estimates for $\calL V_0 \equiv \calL\psi_0$, \eqref{LY-1}, \eqref{LX3}, \eqref{LPM} and \eqref{LX2andp}, can be reduced to the corresponding estimates already obtained for $S\psi_0$, namely \eqref{Spsi0estY-1} and those obtained in Lemmas \ref{firstSemigroupLemma} and \ref{secondSemigroupLemma}.

Next we prove another technical lemma, which will be used for the nonlinear term $\calB (U,W)$.

\begin{lemma} \label{goftNonLinEst}
  Let $k \in \Omega_I$. Then, if $M_2$ is as in \eqref{M2}, it holds that:
    \begin{enumerate}
         \item if $g(t) = bt$, with $b < \frac{M_2}{2}$, then it follows that
         \[(g(t) - g(s))|k| - (t-s)\sigma(\Delta^2+\Delta)(k) \leq - \frac{M_2 (t-s)}{2}|k|^4,\]
         for all $t$, $s$ such that $0\leq s \leq t$ and all $k \in \Omega_I$;
        \item if $g(t) = a\sqrt[4]{t}$ then there exists $C=C(a)>0$ such that
        \[(g(t) - g(s)) |k| - (t-s)\sigma(\Delta^2+\Delta)(k) \leq C - \frac{M_2 (t-s)}{2}|k|^4,\]
        for all $t$, $s$ such that $0\leq s \leq t$ and all $k \in \Omega_I$.

      \end{enumerate}
\end{lemma}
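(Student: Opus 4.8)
The plan is to recognize this as the time-increment version of Lemma~\ref{goftLinEst}: in each item the left-hand side is precisely the quantity already estimated there, but with the time $t$ replaced by the nonnegative increment $t-s$, provided one first controls the factor $g(t)-g(s)$. The single new ingredient is the subadditivity of the fourth root on $[0,\infty)$, namely $\sqrt[4]{t}-\sqrt[4]{s}\leq\sqrt[4]{t-s}$ for $0\leq s\leq t$. First I would establish this by setting $u=\sqrt[4]{s}$ and $v=\sqrt[4]{t-s}$ and observing that $(u+v)^4\geq u^4+v^4$, whence $\sqrt[4]{t}=\sqrt[4]{u^4+v^4}\leq u+v=\sqrt[4]{s}+\sqrt[4]{t-s}$.

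For item (1) no subadditivity is needed: since $g$ is linear, $g(t)-g(s)=b(t-s)$ exactly, so the left-hand side is identically $b(t-s)|k|-(t-s)\sigma(\Delta^2+\Delta)(k)$. This is the expression from item (1) of Lemma~\ref{goftLinEst} with $t$ replaced by $t-s\geq0$, and because $b<M_2/2$ and $|k|\geq1$ on $\Omega_I$ the same one-line computation returns the bound $-\tfrac{M_2(t-s)}{2}|k|^4$.

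For item (2) I would use the subadditivity inequality to write $g(t)-g(s)=a(\sqrt[4]{t}-\sqrt[4]{s})\leq a\sqrt[4]{t-s}$, and then combine with $\sigma(\Delta^2+\Delta)(k)\geq M_2|k|^4$ to bound the left-hand side above by $a\sqrt[4]{t-s}\,|k|-(t-s)M_2|k|^4=f\!\big(\sqrt[4]{t-s}\,|k|\big)-\tfrac{M_2(t-s)}{2}|k|^4$, where $f(z)=az-\tfrac{M_2}{2}z^4$ is exactly the globally bounded function appearing in the proof of Lemma~\ref{goftLinEst}. Taking $C=C(a)=\max\{\sup f,1\}$ as before yields the claimed estimate. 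The only genuine obstacle is the subadditivity step, and even that is elementary; the remainder is a verbatim transcription of the previous lemma under the substitution $t\mapsto t-s$.
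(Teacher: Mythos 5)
Your proof is correct. Item (1) is handled identically in the paper: linearity of $g$ gives $g(t)-g(s)=b(t-s)$, and the claim reduces to item (1) of Lemma~\ref{goftLinEst} with $t$ replaced by $t-s\geq 0$. For item (2), however, you take a genuinely different route. Your key ingredient is the subadditivity $\sqrt[4]{t}\leq\sqrt[4]{s}+\sqrt[4]{t-s}$ (correctly justified via $(u+v)^4\geq u^4+v^4$), which lets you bound $g(t)-g(s)\leq a\sqrt[4]{t-s}$ and then literally re-run item (2) of Lemma~\ref{goftLinEst} with $t$ replaced by $t-s$, using the same function $f(z)=az-\tfrac{M_2}{2}z^4$ and the same constant $C(a)=\max\{\sup f,1\}$. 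The paper instead keeps $t$ and $s$ separate: it writes
\[(g(t)-g(s))|k|-(t-s)M_2|k|^4=f\bigl(\sqrt[4]{t}\,|k|\bigr)-f\bigl(\sqrt[4]{s}\,|k|\bigr)-\tfrac{M_2(t-s)}{2}|k|^4\]
and bounds the difference $f(\sqrt[4]{t}|k|)-f(\sqrt[4]{s}|k|)$ by $C$ via a case analysis: if $f$ at the $s$-point is nonnegative, drop it and use the global upper bound on $f$; if it is negative, both arguments lie in the interval beyond the positive root of $f$ where $f$ is decreasing, so the difference is nonpositive. Your argument is more streamlined and makes the ``replace $t$ by $t-s$'' heuristic literal, at the cost of one extra elementary inequality; the paper's avoids subadditivity but needs the root-location and monotonicity facts about $f$. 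Both yield the same conclusion with the same constant, so either proof is acceptable.
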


\begin{proof}
  Let us begin with item (1), the case $g(t)=bt$, $b < M_2/2$. In view of Lemma \ref{goftLinEst} item (1) this is trivial since
  \[(g(t) - g(s))|k| - (t-s)\sigma(\Delta^2+\Delta)(k) = (t-s)b|k| - (t-s)\sigma(\Delta^2+\Delta)(k),\]
  and $t-s \geq 0$.

  Next consider $g(t) = a\sqrt[4]{t}$. We first note that
  \[(g(t) - g(s)) |k| - (t-s)\sigma(\Delta^2+\Delta)(k) \leq (g(t) - g(s)) |k| - (t-s)M_2 |k|^4,\]
  using, again, \eqref{M2} and the fact that $t-s\geq 0$. Next we observe that
  \[(g(t) - g(s)) |k| - (t-s)M_2 |k|^4 = f(a\sqrt[4]{t})-f(a\sqrt[4]{s}) - \frac{M_2 (t-s)}{2}|k|^4,\]
  where $f$ was introduced in the proof of Lemma \ref{goftLinEst}. There are two possibilities: either $f(a\sqrt[4]{s})\geq 0$, in which case we may ignore this term and use the boundedness from above of $f$ to obtain (2), or $f(a\sqrt[4]{s})< 0$. Let us assume the latter and note that $f$ has only two real roots, namely $0$ and $\sqrt[3]{\frac{2a}{M_2}}>0$. In addition, $f$ restricted to the positive real axis is only negative on the interval $\left(\sqrt[3]{\frac{2a}{M_2}}, + \infty\right)$, on which it is also decreasing. Therefore, since $s \leq t$, we have $f(a\sqrt[4]{t}) \leq f(a\sqrt[4]{s})$, from which (2) follows immediately.

\end{proof}

Now we re-write \eqref{FouriercalB} in a more convenient form and estimate:
\begin{align}\label{FouriercalBconvenient}
 \nonumber |\mathcal{F}&(\calB (U,W))(t,k)| =
 \left|-\int_0^t e^{(g(t)-g(s))|k| -(t-s) \sigma(\Delta^2+\Delta)(k)}\Big[ \right.\\
  & \left. \sum_{j \in \mathbb{Z}_\ast^n, j \neq k}  ((k - j)\cdot j) \, e^{g(s)(|k|-|k-j|-|j|)}\hat{U}(s,k-j)  \hat{W}(s,j)\Big] \, ds \right| \\
 \label{goodEstNonLin} & \leq \int_0^t e^{(g(t)-g(s))|k| -(t-s) \sigma(\Delta^2+\Delta)(k)}\Big[
  \sum_{\substack{ j \in \mathbb{Z}_\ast^n \\ j \neq k}}|k-j||j||\hat{U}(s,k-j) | |\hat{W}(s,j)|\Big] \, ds,
\end{align}
where we used the triangle inequality to estimate $|k|-|k-j|-|j|\leq 0$.

In view of Lemma \ref{goftNonLinEst} it is easy to see that the  estimates on the term \eqref{goodEstNonLin}, namely \eqref{BY-1}, \eqref{BX3}, \eqref{BPM} and \eqref{BX2andp}, can be reduced to the corresponding ones for $B(F,G)$, \eqref{BFGY-1}, \eqref{BFGestY-1X3}, \eqref{BFGPM-p}, \eqref{BFGX2andp1}, \eqref{BFGPM1lessp} and \eqref{BFGX2andp2}, established in the previous sections.

We now comment on how these results imply analyticity of solutions.  By the periodic analogue of Theorem IX.13
of \cite{reedSimon}, a function is analytic with radius of analyticity at least $\rho$ if its Fourier series decays
like $e^{-\tilde{\rho}|k|}$ for all $\tilde{\rho}<\rho.$  With a solution $V\in\mathcal{Y}^{-1},$ then at each time $t$,
we have,  for any $\varepsilon>0,$ the existence of $c>0$ such that
\begin{equation}\nonumber
c\sum_{k\in\mathbb{Z}^{n}_{*}}e^{(g(t)-\varepsilon)|k|}|\hat{\psi}(t,k)|
\leq \sum_{k\in\mathbb{Z}^{n}_{*}} e^{(g(t)-\varepsilon)|k|}\frac{e^{\varepsilon|k|}}{|k|}|\hat{\psi}(t,k)|
\leq \|V\|_{\mathcal{Y}^{-1}}.
\end{equation}
We see from this that $\hat{\psi}(t,\cdot)$ decays like $e^{-\tilde{\rho}|k|}$ for any $\tilde{\rho}<g(t),$ and thus
$\psi$ is analytic with radius of analyticity at least $g(t).$  Similarly, for solutions with $V\in\mathcal{PM}^{(n-1)-p},$
the solution of Kuramoto-Sivashinsky, $\psi,$ is again analytic with radius of analyticity at least $g(t).$

By virtue of these considerations we have established the following results.

\begin{theorem}\label{firstAnalyticityTheorem}
 Let $T>0$ be given.  (If the conditions of Case A hold, then $T$ may be
taken to be $T=\infty.$)
Let $n\geq1.$  There exists $\varepsilon>0$ such that for any $\phi_{0}$ with $\mathbb{P}\phi_{0}\in Y^{-1},$ if
$\|\mathbb{P}\phi_{0}\|_{Y^{-1}}<\varepsilon,$ then there exists  $\phi$ with 
$\mathbb{P}\phi\in\mathcal{Y}^{-1}\cap \mathcal{X}^{3}$ such that $\phi$ is an analytic mild solution
to the initial value problem \eqref{KSEquationOriginal}, \eqref{data} with radius of analyticity at least $R(t) = \max\{a\sqrt[4]{t},bt\}$, with $b<M_2$ and $a > 0$.
\end{theorem}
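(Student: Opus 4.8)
The plan is to apply the abstract fixed-point result Lemma \ref{fixedpoint} to the weighted equation \eqref{Videntweightg}, written as $V = \calL V_0 - \frac{1}{2}\calB(V,V)$ with $V_0 = \psi_0$, in the Banach space $\calY^{-1}\cap\calX^{3}$ equipped with the norm $\vertiii{U} = \|U\|_{\calY^{-1}} + \|U\|_{\calX^{3}}$, and to run this argument once for each temporal weight $g(t)=a\sqrt[4]{t}$ and $g(t)=bt$. The required hypotheses are exactly the linear bounds \eqref{LY-1}, \eqref{LX3} on $\calL V_0$ and the bilinear bounds \eqref{BY-1}, \eqref{BX3} on $\calB(V,V)$; once these hold, Lemma \ref{fixedpoint} produces, for $\|\psi_0\|_{Y^{-1}}$ small enough, a unique small $V\in\calY^{-1}\cap\calX^{3}$, and the final step is to convert control of $V$ into Fourier decay of $\psi$.

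For the linear estimates I would start from the Fourier representation \eqref{FouriercalL}: the only difference between $\calL$ and the unweighted semigroup $S$ is the multiplier $e^{g(t)|k|}$. Lemma \ref{goftLinEst} shows that on $\Omega_I$ the combined exponent $g(t)|k| - t\sigma(\Delta^{2}+\Delta)(k)$ is dominated by $-\frac{M_2 t}{2}|k|^{4}$ up to an additive constant $C(a)$, the latter contributing only a harmless multiplicative factor $e^{C(a)}$; for the weight $g(t)=bt$ the same computation in fact goes through for any $b<M_2$, with the decay rate $\frac{M_2}{2}|k|^4$ simply replaced by $(M_2-b)|k|^4$, which is precisely why the theorem admits all $b<M_2$. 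On the finite set $\Omega_F$ the exponent is bounded on $[0,T]$. Consequently the computations that produced \eqref{Spsi0estY-1} (and those of Lemmas \ref{firstSemigroupLemma} and \ref{secondSemigroupLemma}) transfer verbatim, yielding \eqref{LY-1} and \eqref{LX3}.

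For the bilinear estimates the key structural point is the reduction \eqref{FouriercalBconvenient}--\eqref{goodEstNonLin}: the interior weight collapses to $e^{g(s)(|k|-|k-j|-|j|)}\le 1$ by subadditivity of $|\cdot|$, while Lemma \ref{goftNonLinEst} bounds the remaining factor $e^{(g(t)-g(s))|k| - (t-s)\sigma(\Delta^{2}+\Delta)(k)}$ by $e^{-\frac{M_2(t-s)}{2}|k|^{4}}$ (again up to a constant, and again for all $b<M_2$). After these two reductions the majorant \eqref{goodEstNonLin} is dominated term-by-term by the very integrand that produced \eqref{BFGY-1} and \eqref{BFGestY-1X3} for the unweighted operator $B(F,G)$ in Section \ref{wienerDataBilinearSection}, so \eqref{BY-1} and \eqref{BX3} follow with the same constants. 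Lemma \ref{fixedpoint} then delivers $V$.

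Finally I would extract analyticity. Setting $\psi = e^{-g(t)|D|}V$ gives a mild solution of \eqref{mildKS}, and since $\hat\psi(t,k)=e^{-g(t)|k|}\hat V(t,k)$ with $g(t)\ge 0$ we have $\vertiii{\psi}\le\vertiii{V}$, so $\psi$ lies in the uniqueness ball of Lemma \ref{fixedpoint} and therefore coincides with the solution of Theorem \ref{existY-1}, independently of the weight. The chain of inequalities displayed just before the theorem shows $\sum_{k}e^{(g(t)-\varepsilon)|k|}|\hat\psi(t,k)|\le\|V\|_{\calY^{-1}}<\infty$ for every $\varepsilon>0$, so $\hat\psi(t,\cdot)$ decays like $e^{-\tilde\rho|k|}$ for each $\tilde\rho<g(t)$, and the periodic analogue of Theorem IX.13 of \cite{reedSimon} gives analyticity with radius at least $g(t)$. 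Applying this with $g(t)=a\sqrt[4]{t}$ and with $g(t)=bt$ to the one and the same $\psi$ yields radius at least $R(t)=\max\{a\sqrt[4]{t},bt\}$. I expect the only genuine obstacle to be the two exponential reductions, that is, checking that the weights $e^{g(t)|k|}$ do not destroy the convergence of the frequency sums; this is exactly what Lemmas \ref{goftLinEst} and \ref{goftNonLinEst}, together with the cancellation $|k|\le|k-j|+|j|$, are designed to guarantee, after which every remaining step merely reprises the estimates of Section \ref{wienerDataBilinearSection}.
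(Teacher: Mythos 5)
Your proposal is correct and follows essentially the same route as the paper: the weighted fixed-point argument for $V=e^{g(t)|D|}\psi$ in $\calY^{-1}\cap\calX^{3}$, the reduction of the linear and bilinear estimates to those of Section \ref{wienerDataBilinearSection} via Lemmas \ref{goftLinEst} and \ref{goftNonLinEst} together with the subadditivity bound $|k|\le|k-j|+|j|$, and the final Fourier-decay argument for analyticity with radius $g(t)$. Your two small additions — observing that the exponent computation works for any $b<M_2$ with decay rate $(M_2-b)|k|^4$ (reconciling the lemma's stated hypothesis $b<M_2/2$ with the theorem's claim $b<M_2$), and invoking the uniqueness clause of Lemma \ref{fixedpoint} to identify the solutions obtained with the two different weights — are both sound and, if anything, tighten the exposition.
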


\begin{theorem}\label{secondAnalyticityTheorem}
Let $n \in \{1, \, 2\}$. Let $p\in(0,1/2)$  and $T>0$ be given.
(If the conditions of Case A hold, then $T$ may be
taken to be $T=\infty.$)
There exists $\varepsilon>0$ such that for any $\phi_{0}$ with $\mathbb{P}\phi_{0}\in PM^{(n-1)-p},$ if
$\|\mathbb{P}\phi_{0}\|_{PM^{(n-1)-p}}<\varepsilon,$ then there exists $\phi$ with
$\mathbb{P}\phi\in\mathcal{PM}^{(n-1)-p}\cap \mathcal{X}^{2+p}$ such that $\phi$ is an analytic mild solution
to the initial value problem \eqref{KSEquationOriginal}, \eqref{data} with radius of analyticity at least $R(t) = \max\{a\sqrt[4]{t},bt\}$, with $b<M_2$ and $a > 0$.
\end{theorem}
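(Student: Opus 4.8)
The plan is to carry out Bae's two-norm scheme on the exponentially weighted unknown $V=e^{g(t)|D|}\psi$, reusing the existence machinery of Theorems \ref{n=1PseudomeasureTheorem} and \ref{n=2PseudomeasureTheorem} essentially unchanged, and then to extract analyticity from the frequency decay that membership of $V$ in a pseudomeasure space imposes on $\psi$. Concretely, I would fix a temporal weight $g$ of one of the two forms \eqref{fourthroot}, \eqref{lintime}, set $V_0=\mathbb{P}\phi_0\in PM^{(n-1)-p}$, and seek a fixed point of $V=\calL V_0-\frac12\calB(V,V)$ in the Banach space $X=\calPM^{(n-1)-p}\cap\calX^{2+p}$ normed by the sum of the two norms. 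By Lemma \ref{fixedpoint} this requires only the four weighted estimates \eqref{LPM}, \eqref{LX2andp}, \eqref{BPM}, \eqref{BX2andp}, after which a sufficiently small $\varepsilon$ produces a unique small $V$.

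The key point is that each of these four estimates collapses onto an unweighted estimate already proved. For the linear term I would split $\mathbb{Z}^n_*=\Omega_F\cup\Omega_I$. On the finite set $\Omega_F$ the weight $e^{g(t)|k|}$ is bounded (since $|k|\leq M_3$ and either $T<\infty$ or $\Omega_F=\emptyset$), while on $\Omega_I$ Lemma \ref{goftLinEst} shows the exponent $g(t)|k|-t\sigma(\Delta^2+\Delta)(k)$ is dominated by $C(a)-\tfrac{M_2}{2}t|k|^4$ (for the $\sqrt[4]{t}$ weight) or by $-\tfrac{M_2}{2}t|k|^4$ (for $g(t)=bt$ with slope in the admissible range $b<M_2/2$ dictated by the lemma). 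Hence the Fourier symbol \eqref{FouriercalL} is controlled pointwise by a fixed multiple of the unweighted semigroup symbol with $M_2$ replaced by $M_2/2$, so \eqref{LPM} and \eqref{LX2andp} follow line by line from Lemmas \ref{firstSemigroupLemma} and \ref{secondSemigroupLemma}, the halving of $M_2$ and the factor $e^{C(a)}$ being absorbed into the constants.

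For the bilinear term the decisive structural observation is the subadditivity $|k|-|k-j|-|j|\leq0$, which makes the interaction weight $e^{g(s)(|k|-|k-j|-|j|)}\leq1$; discarding it is exactly the passage to \eqref{goodEstNonLin}. The surviving propagator $e^{(g(t)-g(s))|k|-(t-s)\sigma(\Delta^2+\Delta)(k)}$ is then bounded on $\Omega_I$ by $e^{C(a)}e^{-\frac{M_2}{2}(t-s)|k|^4}$ via Lemma \ref{goftNonLinEst} (and trivially on $\Omega_F$), so \eqref{BPM} and \eqref{BX2andp} reduce to the bilinear bounds \eqref{BFGPM-p}, \eqref{BFGX2andp1} ($n=1$) and \eqref{BFGPM1lessp}, \eqref{BFGX2andp2} ($n=2$), again with $M_2$ halved. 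With all four estimates in hand Lemma \ref{fixedpoint} produces $V$; then $\psi=e^{-g(t)|D|}V$ solves the mild formulation \eqref{mildKS} with the same small data, so by the uniqueness already established it coincides with the solution of Theorem \ref{n=1PseudomeasureTheorem} or \ref{n=2PseudomeasureTheorem}. Since $V\in\calPM^{(n-1)-p}$ forces $|\hat\psi(t,k)|\leq C|k|^{-((n-1)-p)}e^{-g(t)|k|}$, the periodic version of Theorem IX.13 of \cite{reedSimon} gives analyticity at each positive $t$ with radius at least $g(t)$; running this for every $a>0$ and every admissible $b$ and identifying the solutions by uniqueness yields the radius $\max\{a\sqrt[4]{t},bt\}$.

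I expect the only genuine obstacle to lie in the nonlinear estimate, and there the entire difficulty is concentrated in the subadditivity inequality $e^{g(s)(|k|-|k-j|-|j|)}\leq1$: it is precisely what allows the analytic (Gevrey) weight to be transported through the quadratic nonlinearity at no cost in frequency. Once that is isolated, the weighted problem is a bounded perturbation of the unweighted one, and all remaining work is bookkeeping of how $C(a)$ and the constraint $b<M_2/2$ feed into the smallness threshold $\varepsilon$.
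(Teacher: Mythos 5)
Your proposal follows the paper's own argument essentially verbatim: the same weighted unknown $V=e^{g(t)|D|}\psi$, the same reduction of \eqref{LPM}--\eqref{LX2andp} and \eqref{BPM}--\eqref{BX2andp} to the unweighted estimates via Lemmas \ref{goftLinEst} and \ref{goftNonLinEst} together with the subadditivity $|k|-|k-j|-|j|\leq0$, the same application of Lemma \ref{fixedpoint}, and the same appeal to the periodic analogue of Theorem IX.13 of \cite{reedSimon} to convert the decay $e^{-g(t)|k|}$ into a lower bound $g(t)$ on the radius of analyticity. The argument is correct and matches the paper's proof.
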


\section{Concluding remarks}\label{conclusionSection}

We close now with a few remarks.

First, we comment on our bound for the radius of analyticity.  We have shown that our solutions have radius of
analyticity which grows at least like $t^{1/4},$ and also at least like $t.$  Of course, the rate $t^{1/4}$ is faster
for times near zero, and the rate $t$ is faster for large times.  A fractional-power rate has previously been
observed for the Navier-Stokes equations (where the rate is $t^{1/2}$) and
for the Kuramoto-Sivashinsky equation (where the rate is $t^{1/4}$) for solutions on $\mathbb{R}^{n}$
\cite{baePAMS}, \cite{foiasTemam}, \cite{grujicKukavica}.
For spatially periodic problems, rates like $t$ have been observed previously for the Navier-Stokes equations
\cite{foiasTemam} or for more general parabolic equations \cite{ferrariTiti}.  We have previously observed
for the Navier-Stokes equations that in the periodic case, one gets the improvement that both of these rates hold
\cite{ALN4}.  The present work shows this improvement in the periodic case of the Kuramoto-Sivashinsky equation.
The radius of analyticity of solutions is relevant for the convergence rate of numerical simulations \cite{doelmanTiti}.

Let $n \in \{1,2\}$. Then we note that the two function spaces for the initial data
we have considered in this work, $Y^{-1}$ and $PM^{(n-1)-p}$, with $p<1/2$,
are not comparable.  Consider, for instance, $f$  such that
\[|\hat{f}(k)|=\frac{1}{|k|^{n-1}}, \text{ for all } k.\]
Then $f\in PM^{(n-1)-p},$ but $f\notin Y^{-1}.$
On the other hand, let $f$  be such that
\[
|\hat{f}(j)| =
\left\{
\begin{array}{ll}
|j|^{3/4} & \text{ if } |j|= 2^{4\ell}, \; \ell = 1, 2, \ldots \\
0 & \text{ otherwise. }
\end{array}
\right.
\]
Then $f\in Y^{-1}$ but $f\notin PM^{-1/2},$
and thus $f\notin PM^{(n-1)-p}.$

In the introduction, we mentioned the Navier-Stokes results of Koch and Tataru \cite{kochTataru}, Cannone and Karch
\cite{cannoneKarch}, and Lei and Lin \cite{leiLin} as works proving existence of solutions for the Navier-Stokes equations
with low-regularity data.  It should be noted that the function spaces considered in the aforementioned works, $BMO^{-1},$ $PM^{2},$ and $X^{-1},$ respectively,
are all critical
spaces for the Navier-Stokes equations.

If we discard the unstable Laplacian and consider \eqref{KSEquationOriginal} in full space $\real^n$ then it is easy to see that this modified Kuramoto-Sivashinsky equation is invariant under the scaling 
\[\lambda \mapsto \lambda^2 \psi(\lambda x, \lambda^4 t).\]
Thus, among the hierarchy of spaces considered in this work, the spaces $Y^{-2}$ and $PM^{n-2}$ are {\it critical spaces}, i.e. whose norms are invariant under this scaling.
In the present work we lower the regularity requirements for existence theory for the Kuramoto-Sivashinsky
equation as compared to the prior literature, 
and have proved existence of solutions in spaces of negative index, namely $Y^{-1}$ and $PM^{(n-1)-p}$, for any $0<p<1/2$, but these spaces are not critical.  Therefore
there remains work to be done to continue lowering the regularity threshold for the initial data.

We also mentioned in the introduction that solutions of the Kuramoto-Sivashinsky equation have been proved to be global
in one spatial dimension, when starting from $H^{1}$ data.  In the present work we have shown existence of solutions with
rough data, but only until a short time (unless the spatial domain $[0,L]$ satisfies $L<2\pi$).  But we have shown that the
solutions are analytic at positive times, and thus the solutions instantaneously become $H^{1}$ solutions which could then
be continued for all time.  So, our one-dimensional solutions are in fact global.  However, the present method would not
extend on its own to demonstrate this.  The radius of analyticity that we prove grows in time, like both $t^{1/4}$ and
$t.$  This growth of the radius for all time is possible in the small-domain case (again, $L<2\pi$), but in the presence of
linearly growing modes ($L>2\pi$), one would not expect this.  Instead, the solution in some cases tends toward
coherent structures such as traveling waves or time-periodic waves, and these attracting solutions tend to have
finite radius of analyticity.  The long-time behavior of the radius of analyticity for the initial value problem, then, is to
tend towards this value of the radius of analyticity rather than to tend towards infinity.  This can be seen from
computational work such as \cite{papageorgiou1}, \cite{papageorgiou2}.  Understanding in more detail the time
evolution of the radius of analyticity of solutions of the Kuramoto-Sivashinsky problem will be a subject of future work.

\section*{Acknowledgments}
The second and third authors gratefully acknowledge the hospitality of the Department of Mathematics at Drexel University, where part of this research was done.

DMA gratefully acknowledges support from the National Science Foundation through grants DMS-1907684 and
DMS-2307638. MCLF was partially supported by CNPq, through grant \# 304990/2022-1, and FAPERJ, through  grant \# E-26/201.209/2021.
HJNL acknowledges the support of CNPq, through  grant \# 305309/2022-6, and of FAPERJ, through  grant \# E-26/201.027/2022.

\section*{Data availability statement}
Data sharing not applicable to this article as no datasets were generated or analysed during the current study.

\bibliography{KSexistAn.bib}{}

\begin{thebibliography}{10}

\bibitem{ALN4}
D.M. Ambrose, M.C.~Lopes Filho, and H.J.~Nussenzveig Lopes.
\newblock Existence and analyticity of the {L}ei-{L}in solution of the
  navier-stokes equations on the torus.
\newblock {\em Proc. Amer. Math. Soc.}, 2023.
\newblock In press.

\bibitem{ambroseMazzucato1}
D.M. Ambrose and A.L. Mazzucato.
\newblock Global existence and analyticity for the 2{D}
  {K}uramoto-{S}ivashinsky equation.
\newblock {\em J. Dynam. Differential Equations}, 31(3):1525--1547, 2019.

\bibitem{ambroseMazzucato2}
D.M. Ambrose and A.L. Mazzucato.
\newblock Global solutions of the two-dimensional {K}uramoto-{S}ivashinsky
  equation with a linearly growing mode in each direction.
\newblock {\em J. Nonlinear Sci.}, 31(6):Paper No. 96, 26, 2021.

\bibitem{AuscherTchamitchian1999}
P.~Auscher and P.~Tchamitchian.
\newblock {Espaces critiques pour le syst{\`e}me des equations de Navier-Stokes
  incompressibles}.
\newblock No modification to the text. This work was done when the first author
  was at Universit{\'e} de Picardie., May 1999.

\bibitem{baePAMS}
H.~Bae.
\newblock Existence and analyticity of {L}ei-{L}in solution to the
  {N}avier-{S}tokes equations.
\newblock {\em Proc. Amer. Math. Soc.}, 143(7):2887--2892, 2015.

\bibitem{baeBiswasTadmor}
H.~Bae, A.~Biswas, and E.~Tadmor.
\newblock Analyticity and decay estimates of the {N}avier-{S}tokes equations in
  critical {B}esov spaces.
\newblock {\em Arch. Ration. Mech. Anal.}, 205(3):963--991, 2012.

\bibitem{belloutEtAl}
H.~Bellout, S.~Benachour, and E.S. Titi.
\newblock Finite-time singularity versus global regularity for hyper-viscous
  {H}amilton-{J}acobi-like equations.
\newblock {\em Nonlinearity}, 16(6):1967--1989, 2003.

\bibitem{benachourEtAl}
S.~Benachour, I.~Kukavica, W.~Rusin, and M.~Ziane.
\newblock Anisotropic estimates for the two-dimensional
  {K}uramoto-{S}ivashinsky equation.
\newblock {\em J. Dynam. Differential Equations}, 26(3):461--476, 2014.

\bibitem{biswasSwanson}
A.~Biswas and D.~Swanson.
\newblock Existence and generalized {G}evrey regularity of solutions to the
  {K}uramoto-{S}ivashinsky equation in {$\Bbb R^n$}.
\newblock {\em J. Differential Equations}, 240(1):145--163, 2007.

\bibitem{bronskiGambill}
J.C. Bronski and T.N. Gambill.
\newblock Uncertainty estimates and {$L_2$} bounds for the
  {K}uramoto-{S}ivashinsky equation.
\newblock {\em Nonlinearity}, 19(9):2023--2039, 2006.

\bibitem{Cannone1995}
M.~Cannone.
\newblock {\em Ondelettes, paraproduits et {N}avier-{S}tokes}.
\newblock Diderot Editeur, Paris, 1995.
\newblock With a preface by Yves Meyer.

\bibitem{Cannone2003}
M.~Cannone.
\newblock Harmonic analysis and {N}avier-{S}tokes equations with application to
  the {B}oltzmann equation.
\newblock In {\em Lectures on the analysis of nonlinear partial differential
  equations. {P}art 2}, volume~2 of {\em Morningside Lect. Math.}, pages 1--17.
  Int. Press, Somerville, MA, 2012.

\bibitem{cannoneKarch}
M.~Cannone and G.~Karch.
\newblock Smooth or singular solutions to the {N}avier-{S}tokes system?
\newblock {\em J. Differential Equations}, 197(2):247--274, 2004.

\bibitem{cotiZelatiEtAl}
M.~Coti~Zelati, M.~Dolce, Y.~Feng, and A.L. Mazzucato.
\newblock Global existence for the two-dimensional {K}uramoto-{S}ivashinsky
  equation with a shear flow.
\newblock {\em J. Evol. Equ.}, 21(4):5079--5099, 2021.

\bibitem{doelmanTiti}
A.~Doelman and E.S. Titi.
\newblock Regularity of solutions and the convergence of the {G}alerkin method
  in the {G}inzburg-{L}andau equation.
\newblock {\em Numer. Funct. Anal. Optim.}, 14(3-4):299--321, 1993.

\bibitem{fengMazzucato}
Y.~Feng and A.L. Mazzucato.
\newblock Global existence for the two-dimensional {K}uramoto-{S}ivashinsky
  equation with advection.
\newblock {\em Comm. Partial Differential Equations}, 47(2):279--306, 2022.

\bibitem{ferrariTiti}
A.B. Ferrari and E.S. Titi.
\newblock Gevrey regularity for nonlinear analytic parabolic equations.
\newblock {\em Comm. Partial Differential Equations}, 23(1-2):1--16, 1998.

\bibitem{foiasTemam}
C.~Foias and R.~Temam.
\newblock Gevrey class regularity for the solutions of the {N}avier-{S}tokes
  equations.
\newblock {\em J. Funct. Anal.}, 87(2):359--369, 1989.

\bibitem{goodman}
J.~Goodman.
\newblock Stability of the {K}uramoto-{S}ivashinsky and related systems.
\newblock {\em Comm. Pure Appl. Math.}, 47(3):293--306, 1994.

\bibitem{grujicKukavica}
Z.~Gruji\'{c} and I.~Kukavica.
\newblock Space analyticity for the {N}avier-{S}tokes and related equations
  with initial data in {$L^p$}.
\newblock {\em J. Funct. Anal.}, 152(2):447--466, 1998.

\bibitem{papageorgiou1}
A.~Kalogirou, E.E. Keaveny, and D.T. Papageorgiou.
\newblock An in-depth numerical study of the two-dimensional
  {K}uramoto-{S}ivashinsky equation.
\newblock {\em Proc. A.}, 471(2179):20140932, 20, 2015.

\bibitem{kochTataru}
H.~Koch and D.~Tataru.
\newblock Well-posedness for the {N}avier-{S}tokes equations.
\newblock {\em Adv. Math.}, 157(1):22--35, 2001.

\bibitem{kukavicaMassatt}
I.~Kukavica and D.~Massatt.
\newblock On the global existence for the {K}uramoto-{S}ivashinsky equation.
\newblock {\em J. Dynam. Differential Equations}, 35(1):69--85, 2023.

\bibitem{kuramoto}
Y.~Kuramoto and T.~Tsuzuki.
\newblock Persistent propagation of concentration waves in dissipative media
  far from thermal equilibrium.
\newblock {\em Progress of Theoretical Physics}, 55:356--369, 1976.

\bibitem{lariosYamazaki}
A.~Larios and K.~Yamazaki.
\newblock On the well-posedness of an anisotropically-reduced two-dimensional
  {K}uramoto-{S}ivashinsky equation.
\newblock {\em Phys. D}, 411:132560, 14, 2020.

\bibitem{leiLin}
Z.~Lei and F.~Lin.
\newblock Global mild solutions of {N}avier-{S}tokes equations.
\newblock {\em Comm. Pure Appl. Math.}, 64(9):1297--1304, 2011.

\bibitem{miaoYuan}
C.~Miao and B.~Yuan.
\newblock Solutions to some nonlinear parabolic equations in pseudomeasure
  spaces.
\newblock {\em Math. Nachr.}, 280(1-2):171--186, 2007.

\bibitem{molinetBS}
L.~Molinet.
\newblock A bounded global absorbing set for the {B}urgers-{S}ivashinsky
  equation in space dimension two.
\newblock {\em C. R. Acad. Sci. Paris S\'{e}r. I Math.}, 330(7):635--640, 2000.

\bibitem{molinetThin}
L.~Molinet.
\newblock Local dissipativity in {$L^2$} for the {K}uramoto-{S}ivashinsky
  equation in spatial dimension 2.
\newblock {\em J. Dynam. Differential Equations}, 12(3):533--556, 2000.

\bibitem{NST}
B.~Nicolaenko, B.~Scheurer, and R.~Temam.
\newblock Some global dynamical properties of the {K}uramoto-{S}ivashinsky
  equations: nonlinear stability and attractors.
\newblock {\em Phys. D}, 16(2):155--183, 1985.

\bibitem{papageorgiou2}
D.T. Papageorgiou, Y.-S. Smyrlis, and R.J. Tomlin.
\newblock Optimal analyticity estimates for non-linear active-dissipative
  evolution equations.
\newblock {\em IMA J. Appl. Math.}, 87(6):964--984, 2022.

\bibitem{reedSimon}
M.~Reed and B.~Simon.
\newblock {\em Methods of modern mathematical physics. {II}. {F}ourier
  analysis, self-adjointness}.
\newblock Academic Press [Harcourt Brace Jovanovich, Publishers], New
  York-London, 1975.

\bibitem{sellTaboada}
G.R. Sell and M.~Taboada.
\newblock Local dissipativity and attractors for the {K}uramoto-{S}ivashinsky
  equation in thin {$2{\rm D}$} domains.
\newblock {\em Nonlinear Anal.}, 18(7):671--687, 1992.

\bibitem{sivashinsky}
G.I. Sivashinsky.
\newblock Nonlinear analysis of hydrodynamic instability in laminar
  flames--{I}. {D}erivation of basic equations.
\newblock {\em Acta Astronautica}, 4:1177--1206, 1977.

\bibitem{tadmor}
E.~Tadmor.
\newblock The well-posedness of the {K}uramoto-{S}ivashinsky equation.
\newblock {\em SIAM J. Math. Anal.}, 17(4):884--893, 1986.

\end{thebibliography}
\bibliographystyle{plain}

\end{document}